\newcounter{saveenum}
\newcommand\Item[1][]{%
  \ifx\relax#1\relax  \item \else \item[#1] \fi
  \abovedisplayskip=0pt\abovedisplayshortskip=0pt~\vspace*{-\baselineskip}}
\newtheorem{thm}{Theorem}
\newtheorem{definition}[thm]{Definition}
\newtheorem{lem}[thm]{Lemma}
\newtheorem{pro}[thm]{Proposition}
\newtheorem{rem}{Remark}
\let\eps\varepsilon
\newcommand{\R}{\mathbb{R}}
\newcommand{\real}{\R}
\newcommand{\bd}{\begin{definition}} 
\newcommand{\ed}{\end{definition}} 
\newcommand{\bp}{\begin{pro}} 
\newcommand{\ep}{\end{pro}} 
\newcommand{\bt}{\begin{thm}} 
\newcommand{\et}{\end{thm}} 
\newcommand{\bi}{\begin{itemize}} 
\newcommand{\ei}{\end{itemize}} 
\newcommand{\bds}{\begin{description}} 
\newcommand{\eds}{\end{description}} 
\newcommand{\beq}{\begin{equation}} 
\newcommand{\eeq}{\end{equation}} 
\newcommand{\blm}{\begin{lem}} 
\newcommand{\elm}{\end{lem}} 
\newcommand{\abs}[1]{\left|#1\right|}
\newcommand{\norm}[1]{\|#1\|}
\newcommand{\norminf}[1]{\|#1\|_\infty}
\newcommand{\derp}[2]{\frac{\partial #1}{\partial #2}}
\newcommand{\trasp}[1]{#1^{\textsf{T}}}
\newcommand{\tr}{\operatorname{tr}}
\newcommand{\diag}[1]{[#1]}
\newcommand{\phimax}{\phi_{\textup{max}}}
\newcommand{\psimax}{\phi_{\textup{max}}}
\newcommand{\umax}{u_{\textup{max}}}
\newcommand{\ubarmax}{\bar u_{\textup{max}}}
\newcommand{\HH}{H}
\newcommand{\hh}{h}
\newcommand{\fr}{\nu}
\newcommand{\hmax}{\hh_{\textup{max}}}
\newcommand{\wmax}{w_{\textup{max}}}
\newcommand{\xmax}{x_{\textup{max}}}
\newcommand{\frmax}{\fr_{\textup{max}}}
\newcommand{\until}[1]{\{1,\dots, #1\}}
\newcommand{\domax}{d_{\textup{max}}}
\newcommand{\bmax}{b_{\textup{max}}}
\newcommand{\amax}{b_{\textup{max}}}
\newcommand{\winv}{\tilde w}
\newcommand{\xinv}{\tilde x}
\newcommand{\gh}{\subscr{g}{h}}
\newcommand{\gfr}{\subscr{g}{f}}
\newcommand{\go}{\subscr{g}{o}}
\newcommand{\gof}{\subscr{g}{of}}
\newcommand{\NR}{c_{\textup{n}}}
\newcommand{\SR}{\subscr{c}{s}}
\newcommand{\OR}{\subscr{c}{o}}
\newcommand{\BO}{\subscr{B}{out}}
\newcommand{\BI}{\subscr{B}{in}}
\newcommand{\BOH}{\trasp{\BO}\Phi(x)}
\newcommand{\BIH}{\trasp{\BI}\Phi(x)}
\newcommand{\BOF}{\trasp{\BO}\Phi(\nu)}
\newcommand{\BIF}{\trasp{\BI}\Phi(\nu)}
\newcommand{\lognorm}[2]{\mu_{#2}\bigl(#1\bigr)}
\newcommand{\subscr}[2]{#1_{\textup{#2}}}
\newcommand{\setdef}[2]{\{#1 \; | \; #2\}}
\newcommand{\map}[3]{#1\colon #2 \rightarrow #3}
\newcommand{\agg}{\operatorname{agg}}
\newcommand{\composite}{\operatorname{cmpst}}
\newcommand{\lrnorm}[2]{\left\|#1\right\|_{#2}}
\newcommand{\Aagg}[1]{|#1|_{\agg}}
\newcommand{\AaggM}[1]{|#1|_{\textup{M}}}
\newcommand{\AM}[1]{|#1|_{\textup{m}}}
\newcommand{\JacM}{\AaggM{J(x,w)}}
\newcommand{\JacMTildeHH}{\tilde J_{\textup{M-HH}}}
\newcommand{\JacMTildeFH}{\tilde J_{\textup{M-FH}}}
\newcommand{\JacMTildeHO}{\tilde J_{\textup{M-HO}}}
\newcommand{\JacMTildeFO}{\tilde J_{\textup{M-FO}}}
\newcommand{\contrateHH}{\subscr{\lambda}{HH}}
\newcommand{\contrateFH}{\subscr{\lambda}{FH}}
\newcommand{\contrateHO}{\subscr{\lambda}{HO}}
\newcommand{\contrateFO}{\subscr{\lambda}{FO}}
\newcommand{\constHH}{\subscr{\tilde c}{HH}}
\newcommand{\constFH}{\subscr{\tilde c}{FH}}
\newcommand{\constHO}{\subscr{\tilde c}{HO}}
\newcommand{\constFO}{\subscr{\tilde c}{FO}}
\newcommand{\1}{\mbox{\fontencoding{U}\fontfamily{bbold}\selectfont1}}
\newcommand{\e}{\mathrm{e}}
\title{Modeling and Contractivity of \\
Neural-Synaptic Networks with Hebbian Learning}
\author{Veronica Centorrino$^{1}$, Francesco Bullo$^{2}$, and Giovanni Russo$^{3}$
\thanks{This work was supported in part by AFOSR grant FA9550-22-1-0059. Giovanni Russo wishes to acknowledge financial support by the European Union - Next Generation EU, under PRIN 2022 PNRR, Project “Control of Smart Microbial Communities for Wastewater Treatment”.}
\thanks{$^{1}$Veronica Centorrino is with Scuola Superiore Meridionale, Naples, Italy. {\tt\small veronica.centorrino@unina.it.}}
\thanks{$^{2}$Francesco Bullo is with the Department of Mechanical Engineering and the Center for Control, Dynamical Systems, and Computation,  University of California, Santa Barbara, USA. {\tt\small bullo@ucsb.edu.}}
\thanks{$^{3}$Giovanni Russo is with the Department of Information and Electric Engineering and Applied Mathematics, University of Salerno, Italy. {\tt\small giovarusso@unisa.it.}}
}
\date{}
\begin{document}
\pagestyle{plain}
\maketitle

\begin{abstract}
\normalsize
This paper is concerned with the modeling and analysis of two of the most commonly used recurrent neural network models (i.e., Hopfield neural network and firing-rate neural network) with dynamic recurrent connections undergoing Hebbian learning rules. To capture the synaptic sparsity of neural circuits we propose a low dimensional formulation. We then characterize certain key dynamical properties. First, we give biologically-inspired forward invariance results.
Then, we give sufficient conditions for the non-Euclidean contractivity of the models. Our contraction analysis leads to stability and robustness of time-varying trajectories -- for networks with both excitatory and inhibitory synapses governed by both Hebbian and anti-Hebbian rules. For each model, we propose a contractivity test based upon biologically meaningful quantities, e.g., neural and synaptic decay rate, maximum in-degree, and the maximum synaptic strength. Then, we show that the models satisfy Dale's Principle. Finally, we illustrate the effectiveness of our results via a numerical example.
\end{abstract}

\section{Introduction}
Driven by the massive availability of data in many applications and the increase in computing power, the leading paradigm to train {\em deep} neural networks has become that of feeding them with data, using backpropagation to learn the network weights. This approach has achieved impressive results, spanning from computer vision to natural language processing~\cite{IG-YB-AC:16} and to the end-to-end control of complex video games~\cite{Granturismo_2022}. Nevertheless, despite their biological inspiration and performance achievements, these systems differ from human intelligence in several ways~\cite{BML-TDU-JBT-SJG:16}.
The backpropagation algorithm is not biologically plausible~\cite{IG-YB-AC:16, RCO-YM:00} and this might explain~\cite{BML-TDU-JBT-SJG:16} the poor ability, typical of human intelligence, of certain deep networks to generalize, compose and abstract knowledge from data. In this context, it has been recently shown that models trained via backpropagation can be extremely {\em fragile}~\cite{NG-VA-BA-AH:20}, in the sense that even small changes in the input can produce large changes in the output.

Motivated by these observations, we consider more biologically plausible Recurrent Neural Network (RNN) systems~\cite{MG-OF-PB:12} with dynamic recurrent connections undergoing nonlinear Hebbian Learning (HL) rules~\cite{DOH:49}. Namely, we characterize the dynamic behavior of two of the most commonly used RNN models that we term as Hopfield Neural Network (HNN) and firing-rate neural network (FRNN); see e.g.,~\cite{PD-LFA:05}. For these networks, an important step is that of finding conditions that guarantee their stability and robustness. To do so, we leverage tools from contraction theory~\cite{WL-JJES:98, GR-MDB-EDS:13, AD-SJ-FB:20o}. Indeed, by ensuring contractivity, one also guarantees global exponential convergence and other useful robustness properties.

\subsection{Related Literature}
Over the last few years, there has been a growing interest in the study of biologically plausible learning rules to train neural networks and in finding connections between these rules and backpropagation~\cite{TPL-DC-DBT-CJA:16, BS-YB:17}. For example, HNNs coupled with an HL rule have been shown to be able to learn the underlying geometry of a given set of inputs~\cite{MG-OF-PB:12}. Along these lines, recently, an unsupervised biologically plausible learning rule has been proposed and it has been demonstrated how this rule allows the network to achieve good performance on the MNIST and CIFAR~\cite{DK-JJH:19} datasets; also, in~\cite{CP-DBC:19} it has been shown that neural networks equipped with both Hebbian and anti-Hebbian learning rules can perform a broad range of unsupervised learning tasks.
RNNs naturally emerge when modeling neural processes~\cite{DR-GH-RW:86}. In~\cite{KDM-FF:12} it has been proved, via suitably defined state and input transformations, that the HNN and the FRNN are mathematically equivalent when there is no synaptic dynamics.
While multistability is a key feature of the original Hopfield model~\cite{JJH:84} (with multiple equilibria interpreted as {\em memories}), significant interest has grown over the years in establishing conditions that ensure convergence to a unique equilibrium point~\cite{TC-SIA:01, AD-AVP-FB:21k, YF-TGK:96, MF-AT:95}.
Indeed, when studying RNN models a key problem is that of guaranteeing stability and robustness; see e.g.,~\cite{EN-JC:21a, HZ-ZW-DL:14}. 
A useful tool to study these properties is contraction theory (which precludes multistability). In fact, contracting systems exhibit highly ordered transient and asymptotic behaviors that appear to be convenient in the context of RNNs.
For example: (i) initial conditions are exponentially forgotten~\cite{WL-JJES:98}; (ii) for time-invariant dynamics, there exists a unique globally exponential stable equilibrium~\cite{WL-JJES:98}; (iii) contraction ensures entrainment to periodic inputs~\cite{GR-MDB-EDS:10a} and implies robustness properties such as input-to-state stability, also when there are delayed dynamics~\cite{AD-SJ-FB:20o, SX-GR-RHM:21}.
Moreover, efficient numerical algorithms can be devised for numerical integration and fixed point computation of contracting systems~\cite{SJ-AD-AVP-FB:21f}. Recent reviews of contraction theory are provided in~\cite{HT-SJC-JJES:21, FB:23-CTDS}. Note that contractivity precludes multistability.
An implicit model that uses contraction analysis to allow for a convex parametrization of stable models is presented in~\cite{MR-IM:20}, while in~\cite{SX-GR-RHM:21} contraction-based conditions are given to characterize disturbance rejection properties of HNNs with delays. Contraction theory is also used in~\cite{LK-ME-JJES:22} to find conditions under which assemblies of RNNs are stable.
In the context of networks with adapting synapses undergoing Hebbian rules we recall~\cite{DWD-JJH:92}, where stability of the combined neural and synaptic dynamics is shown via Lyapunov analysis, and~\cite{LK-ML-JJES-EKM:20}, where Euclidean contraction theory is used to study the stability of RNNs with linear coupling between the different nodes and with dynamic synapses undergoing a correlation-based HL rule.
Other works have also shown that contractivity can be effectively studied on Finsler manifolds~\cite{FF-RS:14} or using non-Euclidean norms for large classes of network systems, arising in biological~\cite{ZA-EDS:14, GR-MDB-EDS:10a} and neural~\cite{AD-AVP-FB:21k, HQ-JP-ZBX:01} applications. Finally, we recall~\cite{GR-FW:22}, where contraction is extended to dynamical systems on time scales, evolving on arbitrary, potentially non-uniform, time domains.

\subsection{Contributions} In the context of the above literature, our main contributions are summarized as follows: 
\begin{enumerate}
    \item we study a number of neural-synaptic systems that combine HNN and FRNN (for the neural dynamics) and two different HL rules (for the synaptic dynamics). The first rule, simply termed HL rule, fulfills the biological properties of locality, cooperativity, synaptic depression, and boundedness; the second rule (Oja-like learning rule) fulfills, in addition, a competitiveness~\cite{WG-WK:02} property. To capture the synaptic sparsity of the neural circuits, relying on out-incidence and in-incidence matrices~\cite{FB:22} we propose a low dimensional formulation for the models we analyze, which we term as Hopfield-Hebbian, firing-rate-Hebbian, Hopfield-Oja, and firing-rate-Oja. The models capture networks with both excitatory and inhibitory synapses governed by both Hebbian and anti-Hebbian learning rules.
    \item We give a biologically-inspired forward invariance result of the dynamics of each model and we show that {under suitable conditions} they satisfy Dale’s principle -- an empirical principle~\cite{HD:35} referring to the fact that an individual neuron {has either only excitatory or only inhibitory synapses}. Also, we give sufficient conditions for the contractivity of each coupled model by leveraging non-Euclidean contraction arguments. Remarkably, our sufficient conditions for contractivity and our lower bounds on the contraction rate are both based upon biologically meaningful quantities, i.e., neural and synaptic decay rate, maximum in-degree, and maximum synaptic strength.
    \item Finally, we complement our theoretical results with numerical simulations on a biologically-inspired network~\cite{NM-AMZ-TE:22}. We leverage this network to illustrate the effectiveness of our conditions and use the numerical results as a motivation to outline possible avenues for future research.
\end{enumerate}
We note that our Hopfield-Hebbian model generalizes the ones analyzed in~\cite{DWD-JJH:92, LK-ML-JJES-EKM:20} by relaxing the assumptions of these papers on the sign of the coefficients of the Hebbian rule and on the linearity of the coupling between neurons. Early versions of our results for a special case of the models were presented in~\cite{VC-FB-GR:22g}, where no proofs were given.

\section{Mathematical Background}\label{review}
Let $\R$ and $\R_{\geq 0}$ denote the set of real and non-negative real numbers, respectively.
If $x$, $y \in \R^n$, $x\leq y$ denotes $x_i \leq y_i$ for all $i$. We denote by “$\circ$" the Hadamard product and let $\1_n \in \R^n$ be the $n$-dimensional vector of all ones, $\diag{x} \in \R^{n \times n}$ be the diagonal matrix having $x$ on the main diagonal and $I_n$ be the $n$-dimensional identity matrix. For $A \in \R^{n \times n}$ we denote by $\textup{spec}(A)$, $\det(A)$, $\tr(A)$ its spectrum, determinant and trace, respectively. Also, its \emph{spectral abscissa} is $\alpha(A) : = \max \{\operatorname{Re}(\lambda) | \lambda \in \textup{spec}(A)\}$, where $\operatorname{Re}(\lambda)$ denotes the real part of the complex number $\lambda$. We say that $A$ is \emph{Hurwitz} if $\alpha(A)<0$. We recall that $M \in \R^{n \times n}$ is \emph{Metzler} if $(M)_{ij}>0$, for all $i \neq j$.
Given $A\in \R^{n \times n}$, its \emph{Metzler majorant} $\AM{A}\in \R^{n \times n}$ is defined by
$$\bigl(\AM{A}\bigr)_{ij}=
\begin{cases} a_{ii},  & \text{if }j=i,\\
\abs{a_{ij}}, & \text{if }j\neq i.
\end{cases}
$$
We denote by $\| \cdot \|$ both a norm on $\R^n$ and its induced matrix norm. The \emph{logarithmic norm} (log norm) of $A \in \R^{n \times n}$ induced by $\| \cdot \|$ is (see e.g.,~\cite{CAD-HH:72}) $$\mu(A) := \lim_{h\to 0^{+}} \frac{\norm{I_n + h A} -1}{h}.$$
Finally, whenever it is clear from the context, we omit specifying the dependence of functions on time $t$.

\subsection{Contraction Theory}
Consider the following nonlinear system:
\beq\label{nlsystem}
\dot x = g(x,t), \quad x(0) = x_0,
\eeq
where $x\in\R^n$ is the state of the system, $\map{g}{C\times [0,\infty)}{\R^n}$ is a smooth nonlinear function with $C\subseteq \R^n$ forward invariant set for the dynamics, and $x_0 \in \R^n$ is the initial condition. We assume that $g(x,t)$ is differentiable in $x$, and that $g(x,t)$ as well as its Jacobian with respect to $x$, denoted as $J(x,t) = \partial g(x,t)/\partial x$, are both continuous in $(x,t)$\footnote{For non-open sets, $C$, differentiability in $x$ means that of $g(\cdot,t)$ can be extended as a differential function on some open set that includes $C$ with the continuity hypotheses still holding on this open set~\cite{GR-MDB-EDS:10a}.}.
We assume the existence and uniqueness of a solution to~\eqref{nlsystem}. Next, we give the following definitions
\bd\label{attractive}
A set $\mathcal{A}\subseteq \R^n$ is \emph{attractive} with respect to the system~\eqref{nlsystem} if the trajectories of every solution of~\eqref{nlsystem} starting from any point of $\R^n$ asymptotically converge to the set $\mathcal{A}$.
\ed
\bd \label{def:contracting_system}
Given a norm $\norm{\cdot}$ with associated log norm $\mu$, the system~\eqref{nlsystem} is \emph{strongly infinitesimally contracting} with respect to $\norm{\cdot}$ on a forward invariant, convex, set $C \subset \R^n$, if for some constant $c >0$, referred as \emph{contraction rate}, it holds
\[
\mu\bigl(J(x,t)\bigr) \leq -c,\quad \forall x \in C,\ t \geq 0.
\]
\ed

Given the dynamics~\eqref{nlsystem}, Definition~\ref{def:contracting_system}
implies that for any two trajectories, say $x(\cdot)$ and $y(\cdot)$ rooted from $x_0,y_0\in C$, the following upper bound holds~\cite{GR-MDB-EDS:10a}
$$\|x(t) - y(t)\| \leq \e^{-ct}\|x_0 - y_0\|,\quad \forall t \geq 0,$$
that is, the distance between any two trajectories rooted in $C$ shrinks exponentially with rate $c$.

We refer to~\cite{FB:23-CTDS} for a detailed monograph on contraction theory.

\subsection{Composite Norms}
Consider $r$ positive integers $n_1,\dots,n_r$, such that $\sum_{i=1}^r n_i=n$ and let $\norm{\cdot}_i$ defined on $\real^{n_i}$ be $r$ \emph{local norms} with their
induced log norms $\lognorm{\cdot}{i}$. Also, let $\norm{\cdot}_{\agg}$ on $\real^r$ be an \emph{aggregating norm} with its induced log norm $\lognorm{\cdot}{\agg}$. The \emph{composite norm} $\norm{\cdot}_{\composite}$ is defined as
\[
\norm{x}_{\composite} = 
\lrnorm{
\begin{bmatrix}
x_1 \\
\vdots\\
x_r
\end{bmatrix}
}{\composite} =
\lrnorm{
\begin{bmatrix}
\norm{x_1}_{1}\\
\vdots\\
\norm{x_r}_{r}
\end{bmatrix}
}{\agg}.
\]
In what follows $\lognorm{\cdot}{\composite}$ is the log norm induced by $\norm{\cdot}_{\composite}$.
Next, consider a block matrix $A\in\real^{n\times{n}}$ with blocks $A_{ij}\in\real^{n_i\times{n_j}}$, $i,j \in \until{r}$. The \emph{aggregate majorant} $\Aagg{A}$ and \emph{aggregate Metzler majorant} $\AaggM{A}$ in $\real^{r\times{r}}$ are
\[
\bigl(\Aagg{A}\bigr)_{ij} := \norm{A_{ij}}_{ij}, \quad 
(\AaggM{A})_{ij} :=
\begin{cases}
\lognorm{A_{ii}}{i}, & \text{if }j=i ,\\
\norm{A_{ij}}_{ij},  & \text{if }j\neq i,
\end{cases}
\]
where 
$\norm{A_{ij}}_{ij} = \max\setdef{\norm{A_{ij}y_j}_i}
{y_j\in\real^{n_j}\text{ s.t. }\norm{y_j}_{j}=1}$.
The following result is due to~\cite{JS-CW:62}, see also~\cite{AD-AVP-FB:21k}.
\begin{lem}\label{lem:eta}
Consider a Metzler matrix $M \in \R^{n\times n}$. For any $p \in [1, \infty]$, and $\delta > 0$, define $\eta_{M,p,\delta}\in \R^n_{\geq0}$ by
\[
\eta_{M,p,\delta} = \left(\frac{l_1^{1/p}}{r_1^{1/q}},\dots,\frac{l_n^{1/p}}{r_n^{1/q}} \right),
\]
where $q \in [1, \infty]$ is the conjugate index of $p$, while $l$ and $r \in \R^n_{\geq0}$ are the left and right dominant eigenvectors of $M+\delta \1_n \trasp{\1_n}$.
Then for each $\eps>0$ there exists $\delta > 0$ such that
\begin{enumerate}
\item $\alpha(M) \leq \mu_{\diag{\eta_{M,p,\delta}}}\bigl(M\bigr) \leq \alpha(M) + \eps$,
\item if $M$ is irreducible, then $\alpha(M) = \mu_{\diag{\eta_{M,p,0}}}\bigl(M\bigr)$.
\end{enumerate}
\end{lem}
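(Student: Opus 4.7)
My plan is to establish part (ii) first, in which equality holds, and then deduce part (i) by applying the irreducible case to the strictly positive perturbation $M_\delta := M + \delta \1_n \trasp{\1_n}$ and letting $\delta \to 0^+$.

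For part (ii), Perron--Frobenius applied to $M + \rho I$ for sufficiently large $\rho>0$ gives strictly positive right and left Perron eigenvectors $r,l>0$ with $Mr = \alpha(M)r$ and $\trasp{l}M = \alpha(M)\trasp{l}$. After shifting $M \mapsto M + \rho I$ (which shifts both $\alpha(M)$ and $\mu_{p,D}(M)$ by the same $\rho$ and leaves the eigenvectors unchanged), I may assume $M \geq 0$ entrywise. Set $D := \diag{\eta_{M,p,0}}$ and note the identity $\eta_i^p = l_i\, r_i^{1-p}$. For $p \in (1,\infty)$, any $v \geq 0$ with $\|v\|_{p,D}=1$, and small $h>0$, expanding $\|(I+hM)v\|_{p,D}^p$ to first order in $h$ and substituting $u_i := v_i/r_i$ (so that the normalization becomes $\sum_i l_i r_i u_i^p = 1$) reduces the proof to showing
\[
\sum_{i,j} l_i M_{ij} r_j\, u_i^{p-1} u_j \;\leq\; \alpha(M) \sum_i l_i r_i u_i^p.
\]
The key step is Young's inequality $u_i^{p-1} u_j \leq u_i^p/q + u_j^p/p$ (valid since $(p-1)q = p$), which after summation yields two terms controlled by the marginal identities $\sum_j l_i M_{ij} r_j = \alpha(M) l_i r_i$ (from $Mr = \alpha(M)r$) and $\sum_i l_i M_{ij} r_j = \alpha(M) l_j r_j$ (from $\trasp{l}M = \alpha(M)\trasp{l}$); adding the two collapses to exactly $\alpha(M)\sum_i l_i r_i u_i^p$. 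This gives $\mu_{p,D}(M) \leq \alpha(M)$, and combined with the universal inequality $\alpha(A) \leq \mu(A)$ yields the equality in (ii). The edge cases $p \in \{1,\infty\}$ are treated separately using the explicit row/column-sum formulas for $\mu_1$ and $\mu_\infty$ applied to $DMD^{-1}$.

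For part (i), since $M_\delta$ has all entries strictly positive, it is irreducible Metzler, so part (ii) yields $\mu_{p, D_\delta}(M_\delta) = \alpha(M_\delta)$ with $D_\delta := \diag{\eta_{M,p,\delta}}$. Subadditivity of the log norm then gives
\[
\mu_{p, D_\delta}(M) \;\leq\; \mu_{p, D_\delta}(M_\delta) + \delta\,\mu_{p, D_\delta}(-\1_n \trasp{\1_n}) \;\leq\; \alpha(M_\delta) + \delta\, \|\1_n \trasp{\1_n}\|_{p, D_\delta}.
\]
Continuity of the Perron root under monotone perturbations yields $\alpha(M_\delta) \to \alpha(M)$ as $\delta\to 0^+$, so for $\delta$ sufficiently small the right-hand side is at most $\alpha(M)+\eps$; the lower bound $\mu_{p,D_\delta}(M) \geq \alpha(M)$ is again $\alpha \leq \mu$.

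The main obstacle I anticipate is in the perturbation step when $M$ is reducible: the Perron eigenvectors of $M_\delta$ may approach the boundary of the positive orthant as $\delta \to 0^+$, so individual entries of $\eta_{M,p,\delta}$ can degenerate to $0$ or $\infty$. One must verify, via a suitable normalization of the eigenvectors (e.g.\ $\sum_i l_i r_i = 1$) and a compactness argument on the simplex, that the growth of $\|\1_n\trasp{\1_n}\|_{p,D_\delta}$ is dominated by the $\delta$ factor so that $\delta\,\|\1_n\trasp{\1_n}\|_{p,D_\delta}\to 0$. A secondary technical point is the clean treatment of the boundary cases $p\in\{1,\infty\}$ in the Young-type computation, which I would address by reducing them to the explicit closed-form expressions for $\mu_1$ and $\mu_\infty$ on similarity-transformed Metzler matrices.
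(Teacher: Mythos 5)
The paper does not actually prove this lemma; it imports it from the cited references \cite{JS-CW:62, AD-AVP-FB:21k}, so your argument can only be judged on its own merits. Your part (ii) is correct and is essentially the standard proof: the identity $\eta_i^p=l_i r_i^{1-p}$, the substitution $u_i=v_i/r_i$, Young's inequality $u_i^{p-1}u_j\le u_i^p/q+u_j^p/p$, and the two marginal identities from $Mr=\alpha(M)r$ and $\trasp{l}M=\alpha(M)\trasp{l}$ do collapse the cross sum to $\alpha(M)\sum_i l_i r_i u_i^p$, with the $p\in\{1,\infty\}$ cases handled by the explicit formulas for $\mu_1$ and $\mu_\infty$ of $DMD^{-1}$. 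Two points you should make explicit there: the supremum defining $\norm{I_n+hM}_{p,[\eta]}$ may be restricted to $v\ge 0$ because $I_n+hM$ is entrywise nonnegative for small $h$ (after the shift) and the weighted $\ell_p$ norm is monotone; and the first-order expansion must be made uniform over the unit sphere (compactness), since $\mu$ is a limit of suprema, not a supremum of limits.

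Part (i) contains a genuine gap, and it is precisely the obstacle you flag: the quantity $\delta\,\norm{\1_n\trasp{\1_n}}_{p,[\eta_{M,p,\delta}]}$ does \emph{not} tend to zero in general, so the subadditivity route cannot close. Concretely, take $n=2$, $M=\operatorname{diag}(0,-1)$ and $p=\infty$, so $\eta_i=1/r_i$. Here $M_\delta$ is symmetric with Perron value $\delta+\delta^2$ and Perron vector $r=l=(1,\delta)$, hence $D_\delta=\operatorname{diag}(1,\delta^{-1})$; a direct computation gives $\norm{\1_2\trasp{\1_2}}_{\infty,[\eta_{M,\infty,\delta}]}=(1+\delta)/\delta$, so $\delta\,\norm{\1_2\trasp{\1_2}}_{\infty,[\eta_{M,\infty,\delta}]}\to 1$. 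Your estimate then only yields $\mu_{\infty,D_\delta}(M)\le\alpha(M)+1+o(1)$, which does not prove statement (i), even though the lemma holds in this example ($M$ commutes with $D_\delta$, so $\mu_{\infty,D_\delta}(M)=\mu_\infty(M)=0=\alpha(M)$). The repair is to replace subadditivity with entrywise monotonicity of weighted $\ell_p$ log norms on Metzler matrices, a property the paper itself invokes later: since $M\le M_\delta$ entrywise and both are Metzler, $0\le \e^{tM}\le \e^{tM_\delta}$ entrywise, hence $\norm{\e^{tM}}_{p,D_\delta}\le\norm{\e^{tM_\delta}}_{p,D_\delta}\le \e^{t\alpha(M_\delta)}$ and therefore $\mu_{p,D_\delta}(M)\le\alpha(M_\delta)$, which converges to $\alpha(M)$ by continuity of the spectral abscissa. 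Equivalently, rerun your Young's-inequality computation with the Perron vectors of $M_\delta$ and bound $l_iM_{ij}r_j\le l_i(M_\delta)_{ij}r_j$ before summing. The lower bound $\alpha(M)\le\mu_{p,D_\delta}(M)$ is fine as you state it.
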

Finally, we report results that can be found, under different technical statements, in~\cite{TS:75, SX-GR-RHM:21} with the upper bound for the log norm in~\ref{majorant:Strom-bounds} introduced in~\cite{GR-MDB-EDS:13}.
\begin{thm}\label{thm:composite-norm-majorants}
For any set of local norms $\norm{\cdot}_i$, $i\in\until{r}$, consider a monotonic aggregating norm $\norm{\cdot}_{\agg}$ over a decomposition of $\R^{n}$ and a matrix $A\in\real^{n\times{n}}$. Then:
\begin{enumerate}
\Item\label{majorant:Strom-bounds}
\begin{align*}
\max_{i\in\until{r}}\norm{A_{ii}}_i & \leq \norm{A}_{\composite} \leq \norm{\Aagg{A}}_{\agg},\\
\max_{i\in\until{r}}\lognorm{A_{ii}}{i} & \leq \lognorm{A}{\composite} \leq \lognorm{\AaggM{A}}{\agg};
\end{align*}
\item\label{majorant:Hurwitz}
if $\AaggM{A}$ is Hurwitz, then $A$ is Hurwitz.
\end{enumerate}
\end{thm}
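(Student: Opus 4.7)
The plan is to first establish the two upper bounds in~\ref{majorant:Strom-bounds}, then derive the two lower bounds by a test-vector argument, and finally deduce~\ref{majorant:Hurwitz} by constructing a diagonally-weighted aggregating norm using Lemma~\ref{lem:eta}.

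For the norm upper bound, take $x=(x_1,\dots,x_r)\in\R^n$ and let $\xi:=(\norm{x_1}_1,\dots,\norm{x_r}_r)\in\R^r$. Writing $(Ax)_i=\sum_{j=1}^r A_{ij}x_j$ and applying the triangle inequality together with the operator-norm definition of $\norm{A_{ij}}_{ij}$ gives $\norm{(Ax)_i}_i\le \sum_j \norm{A_{ij}}_{ij}\norm{x_j}_j = (\Aagg{A}\,\xi)_i$ componentwise. Since $\norm{\cdot}_{\agg}$ is monotonic and the vector on the left is nonnegative, this entrywise bound can be lifted to
\[
\norm{Ax}_{\composite}
=\lrnorm{\bigl(\norm{(Ax)_i}_i\bigr)_{i=1}^r}{\agg}
\le \norm{\Aagg{A}\,\xi}_{\agg}
\le \norm{\Aagg{A}}_{\agg}\norm{x}_{\composite},
\]
which yields $\norm{A}_{\composite}\le \norm{\Aagg{A}}_{\agg}$. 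For the lower bound, fix $i^{\star}\in\argmax_i\norm{A_{ii}}_i$ and pick any unit vector $y\in\R^{n_{i^{\star}}}$ with $\norm{A_{i^{\star}i^{\star}}y}_{i^{\star}}=\norm{A_{i^{\star}i^{\star}}}_{i^{\star}}$; plug in the block vector with $x_{i^{\star}}=y$ and $x_j=0$ for $j\ne i^{\star}$. Then $\norm{x}_{\composite}=\norm{e_{i^{\star}}}_{\agg}$ and the lower bound on $\norm{Ax}_{\composite}$ obtained from the $i^{\star}$-th component of the aggregate vector, combined with monotonicity of $\norm{\cdot}_{\agg}$, yields $\max_i\norm{A_{ii}}_i\le\norm{A}_{\composite}$.

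To pass to the log-norm statements, I will apply these bounds to $I_n+hA$ for small $h>0$ and take the limit. The key computation is that, entrywise, $\Aagg{I_n+hA}_{ii}=\norm{I_{n_i}+hA_{ii}}_i=1+h\lognorm{A_{ii}}{i}+o(h)$ and $\Aagg{I_n+hA}_{ij}=h\norm{A_{ij}}_{ij}$ for $j\ne i$, so for small $h$ the matrix $\Aagg{I_n+hA}$ equals $I_r+h\AaggM{A}+o(h)$ entrywise. Using monotonicity of $\norm{\cdot}_{\agg}$ once more to pass from this entrywise expansion to $\norm{\Aagg{I_n+hA}}_{\agg}\le \norm{I_r+h\AaggM{A}}_{\agg}+o(h)$, and then dividing by $h$ and letting $h\to 0^+$, we obtain $\lognorm{A}{\composite}\le\lognorm{\AaggM{A}}{\agg}$. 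The lower bound $\max_i\lognorm{A_{ii}}{i}\le\lognorm{A}{\composite}$ is obtained analogously by the single-block test vector above, since the restriction of the flow of $I_n+hA$ to the $i^{\star}$-block captures $\lognorm{A_{i^{\star}i^{\star}}}{i^{\star}}$ in the limit.

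For~\ref{majorant:Hurwitz}, note that $\AaggM{A}$ is Metzler by construction. If $\AaggM{A}$ is Hurwitz, then $\alpha(\AaggM{A})<0$, so by Lemma~\ref{lem:eta} we can choose $p\in[1,\infty]$ and $\delta>0$ small enough that the weighted $p$-norm $\norm{\cdot}_{\diag{\eta_{\AaggM{A},p,\delta}}}$ used as the aggregating norm satisfies $\lognorm{\AaggM{A}}{\agg}\le \alpha(\AaggM{A})+\eps<0$ for suitably small $\eps$. This weighted $p$-norm is monotonic, so part~\ref{majorant:Strom-bounds} applies and gives $\lognorm{A}{\composite}\le \lognorm{\AaggM{A}}{\agg}<0$; since $\alpha(A)\le\lognorm{A}{\composite}$, we conclude that $A$ is Hurwitz. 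The main obstacle I anticipate is the careful justification of the entrywise expansion of $\Aagg{I_n+hA}$ and its interaction with monotonicity of $\norm{\cdot}_{\agg}$ in the log-norm limit; once that step is clean, everything else is a direct application of definitions and Lemma~\ref{lem:eta}.
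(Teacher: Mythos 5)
The paper itself gives no proof of Theorem~\ref{thm:composite-norm-majorants}: it is imported from the literature (Str\"om 1975 and the other cited works), so there is no in-paper argument to compare against. Your proof is the standard one underlying those references and is essentially correct: the block triangle inequality together with monotonicity of $\norm{\cdot}_{\agg}$ yields the two norm bounds, applying them to $I_n+hA$ and letting $h\to 0^+$ transfers them to log norms, and part~\ref{majorant:Hurwitz} follows by instantiating the aggregating norm with the weighted $p$-norm supplied by Lemma~\ref{lem:eta} (which is monotonic for $\eta>0$) and invoking $\alpha(A)\le\lognorm{A}{\composite}$. One step should be justified differently: the inequality $\norm{\Aagg{I_n+hA}}_{\agg}\le\norm{I_r+h\AaggM{A}}_{\agg}+o(h)$ does not follow from monotonicity alone, because the two matrices differ in their diagonal entries by $o(h)$ terms whose sign is not controlled, so there is no entrywise ordering to exploit; it does follow immediately from the triangle inequality for the induced matrix norm, since the difference is a diagonal matrix of norm $o(h)$. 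Similarly, the log-norm lower bound is obtained most cleanly not by re-running the test-vector argument on the ``flow'' but by applying the already-proved norm lower bound to $I_n+hA$, whose $i$-th diagonal block is $I_{n_i}+hA_{ii}$, and passing to the limit. With these two small repairs (and the routine remark that the maximizing unit vector $y$ exists by compactness in finite dimensions), the argument is complete.
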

\subsection{Out-incidence and In-incidence Matrices}
Let $G$ be a weighted directed graph with $n$ nodes and $m$ edges, and let $V = \{1,\dots,n\}$ and $E = \{1,\dots,m\}$ be the set of nodes and edges of $G$, respectively. We write $e = (i,j)$, $i, j \in V$, when we want to emphasize the nodes associated with the edge $e$, and we refer to $i$ as the \emph{tail} and to $j$ as the \emph{head} of $e$. According to the context, with a little abuse of notation, we let $e$ denote both an ordered pair $(i,j)$ as well as an element of $E$.
We let $\{a_e\}_{e \in E}$ be the set of weights for the edges of $G$.
The \emph{topological} \emph{in-degree} and \emph{out-degree} of a vertex $i \in V$, are the number of in-neighbors and out-neighbors of $i$, respectively. The \emph{maximum topological in-degree} and \emph{maximum topological out-degree} of the graph $G$, are the highest topological in-degree and out-degree among all vertices in $G$, respectively.
The \emph{adjacency matrix} $A \in\R^{n\times n}$ is defined as follows: for each edge $e = (i,j) \in E$, the entry $(i,j)$ of $A$ is equal to the weight $a_e$ of the edge $(i,j)$, and all other entries of $A$ are equal to zero. The weight matrix $\mathcal{A}\in\R^{m\times m}$ is the diagonal matrix of edge weights, that is $\mathcal{A} := \diag{\{a_e\}_{e \in E}}$.
Finally, for any node $i\in V$ and edge $e \in E$, the \emph{out-incidence matrix} $\BO \in \{0, 1\}^{n\times m}$ and \emph{in-incidence matrix} $\BI \in \{0, 1\}^{n\times m}$ are respectively defined by
\begin{align}
\bigl(\BO \bigr)_{ie} &=
\begin{cases}
1 & \text{ if node $i$ is the head of edge $e$}, \\
0 & \text{ otherwise}, 
\end{cases}
\label{eq:out_incidence_matrix}\\
\bigl(\BI \bigr)_{ie} &=
\begin{cases}
1 & \text{ if node $i$ is the tail of edge $e$},\\
0 & \text{ otherwise}.
\end{cases}
\label{eq:in_incidence_matrix}
\end{align}
Note that, by construction, the matrices $\BO$ and $\BI$ have unit row sums, thus $\trasp{\BO}$ and $\trasp{\BI}$ have unit column sums.
The following result can be found in~\cite[Ex. 9.4]{FB:22}.
\begin{pro}\label{prop:BI_BO}
Consider the matrices $A\in\R^{n\times n}$, $\mathcal{A}\in\R^{m\times m}$, $\BO$, and $\BI \in \{0,1\}^{n\times m}$. Then:
\begin{enumerate}
\item for each $x\in \R^n$ and $e \in E$ of the form $e = (i, j)$,
\beq \label{eq:i_j_component_of}
\bigl(\trasp{\BO}x\bigr)_{e} = x_j, \text{ and } \ \bigl(\trasp{\BI}x\bigr)_{e} = x_i.
\eeq
\item the following identity holds
\begin{align}
A &= \BI \mathcal{A} \trasp{\BO}
\label{eq:A = BO A_m BI^T}.
\end{align}
\item $\norminf{\trasp{\BO}} = \norminf{\trasp{\BI}} = 1$, $\norminf{\BO}$ and $\norminf{\BI}$ are the maximum topological out-degree and maximum topological in-degree of $G$, respectively.
\end{enumerate}
\end{pro}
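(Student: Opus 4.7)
All three claims follow directly from unwinding the definitions in~\eqref{eq:out_incidence_matrix} and~\eqref{eq:in_incidence_matrix}, together with the diagonality of $\mathcal{A}$ and the fact that $\norminf{\cdot}$ on a matrix equals the maximum absolute row sum. The plan is to verify each item by a short direct calculation; there is no conceptual obstacle, only the need to keep the head/tail bookkeeping straight.

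For item (i), I would fix an edge $e=(i,j)$ and note that, by~\eqref{eq:out_incidence_matrix}, column $e$ of $\BO$ has its unique nonzero entry (equal to $1$) in row $j$ (the head), while by~\eqref{eq:in_incidence_matrix} column $e$ of $\BI$ has its unique nonzero entry in row $i$ (the tail). Transposing exchanges rows and columns, so $\bigl(\trasp{\BO} x\bigr)_e = \sum_k (\BO)_{ke}\, x_k = x_j$ and, symmetrically, $\bigl(\trasp{\BI} x\bigr)_e = x_i$.

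For item (ii), I would expand the $(i,j)$-entry of $\BI\, \mathcal{A}\, \trasp{\BO}$ as a double sum and use that $\mathcal{A}$ is diagonal with $\mathcal{A}_{ee}=a_e$ to reduce it to $\sum_{e\in E}(\BI)_{ie}\, a_e\, (\BO)_{je}$. The product $(\BI)_{ie}\,(\BO)_{je}$ is nonzero precisely when $i$ is the tail and $j$ the head of the \emph{same} edge $e$, which happens at most once and only if $(i,j)\in E$; in that case the sum equals $a_{(i,j)}=A_{ij}$, and otherwise it equals $0$, matching the definition of $A$.

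For item (iii), I would observe that by part~(i) each row of $\trasp{\BO}$ contains a single $1$ (at the column of the head of the corresponding edge), hence every row sum equals $1$ and $\norminf{\trasp{\BO}}=1$; the argument for $\trasp{\BI}$ is identical. For $\BO$, the $i$-th row sum is $\sum_{e\in E}(\BO)_{ie}$, which by~\eqref{eq:out_incidence_matrix} counts the edges incident to $i$ of the type prescribed by that definition, so maximizing over $i$ yields the claimed topological degree; an analogous count via~\eqref{eq:in_incidence_matrix} handles $\BI$. The only real ``obstacle'' is making sure the head/tail and in-/out-degree correspondence is tracked consistently throughout.
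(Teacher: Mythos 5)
Your verification is correct and is essentially the only natural argument: the paper itself gives no proof of this proposition (it simply cites \cite[Ex.~9.4]{FB:22}), and your entrywise unwinding of the definitions~\eqref{eq:out_incidence_matrix}--\eqref{eq:in_incidence_matrix}, the diagonality of $\mathcal{A}$, and the max-row-sum characterization of $\norminf{\cdot}$ is exactly what that exercise amounts to. The one point you flag but leave implicit is worth stating: under the paper's convention the head $j$ of an edge $e=(i,j)$ is the \emph{pre-synaptic} (source-of-signal) node, so the row sums of $\BO$ (which mark heads) count out-neighbors and those of $\BI$ (which mark tails) count in-neighbors, which is what makes item (iii) come out with the stated, rather than the swapped, degrees.
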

In what follows, we let $\domax := \norminf{\BI}$ and we refer to Section~\ref{Numerical Example} for an example with a visualization of these graph theoretic concepts.

\section{Modeling}\label{modeling}
We first introduce the dynamical rules governing the neural masses and synaptic weights. Then, we present the coupled neural-synaptic dynamical systems we analyze. For each neuron, say neuron $i$, we denote its mean membrane potential at time $t$ by $x_i(t)\in \R$. The instantaneous firing-rate for the $i$-th neuron, say $\nu_i(t)\in \R_{\geq 0}$, is linked to the membrane potential through a nonlinear non-negative monotonically increasing function $\phi: \R \to \R_{\geq 0}$. That is, $\nu_i = \phi(x_i)$ and we term the function $\phi$ as activation function in what follows.
A popular modeling choice~\cite{WG-WMK-RN-LP:14} is to pick $\phi(x_i)$ as a sigmoid.
\subsection{Neural Dynamics}
\subsubsection{Hopfield Neural Network}
For each neural mass $i$ we consider the continuous-time HNN of the form:
\beq \label{eq:hopfield_dynamic}
\dot x_i = - \NR x_i + \sum_{j=1}^n W_{ij}\phi(x_j)+u_i.
\eeq
The first term on the right-hand side of~\eqref{eq:hopfield_dynamic} models the intrinsic dynamics of neuron $i$ and $\NR$ is its decay rate. The second term models the coupling of neuron $i$ with the other neurons; namely, $W_{ij}: \R_{\geq 0} \to \R$ denotes the effective time-dependent synaptic weight of the signal transmitted from a pre-synaptic neuron $j$ to a post-synaptic neuron $i$, and $\phi: \R \to \R_{\geq 0}$ is the activation function. Finally, $u_i: \R_{\geq 0} \to \R$ is a time-dependent external stimulus to neuron $i$.
\subsubsection{Firing-rate Neural Network}
For each neural mass $i$ we consider the continuous-time FRNN of the form:
\beq \label{eq:firing_rate_dynamic}
\dot \fr_i = - \NR \fr_i + \phi\Biggl(\sum_{j=1}^n W_{ij}\fr_j+u_i\Biggr),
\eeq
where, as in~\eqref{eq:hopfield_dynamic}, the decay rate is $\NR$, the synaptic weight is $W_{ij}: \R_{\geq 0} \to \R$, the external stimulus is $u_i: \R_{\geq 0} \to \R$, and the activation function is $\phi: \R \to \R_{\geq 0}$.
\begin{rem}
The activation functions in~\eqref{eq:hopfield_dynamic} and~\eqref{eq:firing_rate_dynamic} can be different, but they have the same properties.
Therefore, to streamline our derivation, we are using the same symbol for both activation functions.
\end{rem}
\begin{rem}
We term the \textup{RNN}~\eqref{eq:firing_rate_dynamic} as {firing-rate neural network} because when the activation function is non-negative, the positive orthant is forward-invariant and the state $\nu_i$ is interpreted as a firing rate. In contrast, in~\eqref{eq:hopfield_dynamic}, the state $x_i$ can be either positive or negative, and thus $x_i$ is interpreted as a membrane potential. Note that~\eqref{eq:hopfield_dynamic} has the same form as the original Hopfield model~\cite{JJH:84} with the key difference that the synaptic matrix is not assumed symmetric. Despite the absence of the symmetry assumption, with a slight abuse of terminology, we also term the Hopfield-like neural network~\eqref{eq:hopfield_dynamic} as Hopfield neural network. This terminology is consistent with the terminology used in, e.g.,~\cite{TC-SIA:01, YF-TGK:96, MG-OF-PB:12}.
\end{rem}
\subsection{Synaptic Dynamics}
We consider the case where the synaptic weights evolve according to continuous time Hebbian learning rules. Following Hebb’s postulate~\cite{DOH:49}, the weight between two neurons should increase if both neurons are simultaneously active; our modeling, capturing this aspect, is based upon the framework presented in~\cite{WG-WK:02}, where a number of formulations of Hebbian learning are reviewed. The first synaptic rule we consider is modeled via a dynamic of the form:
\beq \label{eq:hebbian_dynamic}
\dot W_{ij} = \HH_{ij}\phi(y_i)\phi(y_j) - \SR W_{ij} + \bar U_{ij},
\eeq
where $y_i$ can be either the membrane potential or the firing-rate (in what follows we use $y_i$ when we want to refer to both state variables). As described next, the above model satisfies the properties of locality, cooperativity, and synaptic depression, which are biologically-inspired requirements for any model that aims to capture Hebbian synaptic plasticity,~\cite{WG-WK:02}. Indeed, the first term on the right-hand side of~\eqref{eq:hebbian_dynamic} describes the cooperation between pre- and post-synaptic activity: in the absence of external stimuli both the pre- and post-synaptic neurons must be active to induce a weight increase or decrease (\emph{cooperativity property}). The coefficient $\HH_{ij}\in \R$ is defined so that a non-zero entry corresponds to an existing synaptic connection and a corresponding evolution of the synaptic weight. Specifically, $H_{ij}$ describes the topology of the network (and this is constant over time), while $W_{ij}(t)$ describes the time-varying evolution of the corresponding synaptic weight.
The second term is a decay factor ($\SR>0$) that prevents the weights from diverging (\emph{synaptic depression property}). Finally, $\bar U_{ij}: \R_{\geq 0} \to \R$ is a time-dependent external stimulus, e.g., it can represent some exogenous phenomena. Moreover, the rule modeled in~\eqref{eq:hebbian_dynamic} is also local in the sense that changes in $W_{ij}$ only depend on the activities of neurons $j$ and $i$ (\emph{locality property}). For our derivations, it is useful to define:
\beq\label{def:hmax}
\hmax :=\displaystyle \max_{i,j \in \until{n}}\abs{\HH_{ij}}.
\eeq 
Moreover, following~\cite{WG-WK:02}, we give the following:
\bd
We call an HL rule with $H_{ij}>0$ \emph{Hebbian learning}, and a rule with $H_{ij}<0$ \emph{anti-Hebbian learning}.
\ed
The second model for Hebbian learning we consider also fulfills a {\em competivity} property. This is a further useful feature of learning
rules implying that, if some weights grow, they do so at the expense of others. To capture this feature, we leverage the following Oja's like learning rule~\cite{EO:82}:
\beq \label{eq:oja_dynamic}
\dot W_{ij} = \HH_{ij}\phi(y_i)\phi(y_j) - \bigl(\SR + \OR \phi^2(y_i)\bigr) W_{ij} + \bar U_{ij},
\eeq
with $\OR>0$.
We observe that if $\OR \! = 0$~\eqref{eq:oja_dynamic} reduces to~\eqref{eq:hebbian_dynamic}.
\begin{rem}
When $y_i$ is the membrane potential and there are no external stimuli, equations~\eqref{eq:hebbian_dynamic} and~\eqref{eq:oja_dynamic} become the ones found in~\cite{WG-WK:02}. When $y_i$ is the firing-rate, through the activation function we are introducing a non-linearity.
We emphasize that to streamline our derivations we are using the same notation for the activation functions across the models, since they verify the same properties. However the activation function in~\eqref{eq:hopfield_dynamic},~\eqref{eq:firing_rate_dynamic},~\eqref{eq:hebbian_dynamic}, and~\eqref{eq:oja_dynamic} can be different.
\end{rem}

\subsection{Coupled Neural Synaptic Models}
\label{sec:coupled_neural_synaptic_model}
Consider an RNN of $n$ neurons with dynamic synapses and fixed topology of interactions. That is, the coefficients of $H = (H_{ij})_{i,j} \in \R^{n\times n}$ describing the Hebbian and anti-Hebbian learning connections are constant.
We make no assumptions on the relative timescales of synaptic and neural activity. We now present the models that will be the subject of our study. These models are obtained by combining the above neural and synaptic dynamics.
\subsubsection{Hopfield-Hebbian Model}
The coupled \emph{Hopfield-Hebbian model} is obtained by combining the HNN~\eqref{eq:hopfield_dynamic}, and the HL rule~\eqref{eq:hebbian_dynamic}, with initial neural and synaptic conditions $x_i(0)\in \R$ and $W_{ij}(0)\in \R$, respectively. 
For our analysis, it is useful to write this system in vector form:
\beq \label{eq:complete_dynamic_system}
\left \{
\begin{aligned}
\displaystyle \dot x &= - \NR x + W\Phi(x) + u, \\
\displaystyle \dot W &= \HH \circ \Phi(x)\trasp{\Phi(x)} - \SR W + \bar U,
\end{aligned}
\right.
\eeq
with initial neural and synaptic conditions $x(0) := x_0\in \R^n$ and $W(0) := W_0 \in \R^{n\times n}$, respectively. In~\eqref{eq:complete_dynamic_system} $x, u  \in \R^n$ are the state and the external neural stimuli, respectively, $\Phi: \R^n \to \R^n_{\geq 0}$ is the term by term application of the function $\phi$, i.e., $\Phi(x)_i = \phi(x_i)$, $\bar U \in \R^{n\times n}$ are the external synaptic stimuli.
\subsubsection{Firing-rate-Hebbian Model}
The coupled \emph{firing-rate-Hebbian model} is obtained by combining the FRNN~\eqref{eq:firing_rate_dynamic}, and the HL rule~\eqref{eq:hebbian_dynamic}, with initial neural and synaptic conditions $\fr_i(0)\in \R$ and $W_{ij}(0)\in \R$, respectively.
In vector form, the system is:
\beq \label{eq:firing_rate_complete_dynamic_system}
\left \{
\begin{aligned}
\displaystyle \dot \fr &= - \NR \fr + \Phi\bigl(W\fr + u\bigr), \\
\displaystyle \dot W &= \HH \circ \Phi(\fr)\trasp{\Phi(\fr)} - \SR W + \bar U,
\end{aligned}
\right.
\eeq
with initial neural and synaptic conditions $\fr(0) := \fr_0 \in \R^n$, and $W_0$, respectively. In~\eqref{eq:firing_rate_complete_dynamic_system} $\fr \in\R^n$ is the vector of the firing-rates, while for notational convenience the other terms are defined consistently with~\eqref{eq:complete_dynamic_system}.
\subsubsection{Hopfield-Oja Model}
The coupled \emph{Hopfield-Oja model} is obtained by combining the HNN~\eqref{eq:hopfield_dynamic}, and the Oja's like synaptic plasticity rule~\eqref{eq:oja_dynamic}, with initial neural and synaptic conditions $x_i(0)$ and $W_{ij}(0)$, respectively. Using the same notation as in~\eqref{eq:complete_dynamic_system} we write this system in vector form as
\beq \label{eq:complete_dynamic_system_oja}
\left \{
\begin{aligned}
\displaystyle \dot x &= - \NR x + W\Phi(x) + u, \\
\displaystyle \dot W &= \HH \circ \Phi(x)\trasp{\Phi(x)} - \bigl(\SR I_n + \OR \diag{\Phi(x)}\diag{\Phi(x)}\bigr)W+\bar U,\\
\end{aligned}
\right.
\eeq
with initial neural and synaptic conditions $x_0$ and $W_0$, respectively.
\begin{extendedArxiv}
\subsubsection{Firing-rate-Oja Model}
The coupled \emph{firing-rate-Oja model} is obtained by combining the FRNN~\eqref{eq:firing_rate_dynamic}, and the Oja's like synaptic plasticity model~\eqref{eq:oja_dynamic}, with initial neural and synaptic conditions $\nu_i(0)$ and $W_{ij}(0)$, respectively. Using the same notation as in~\eqref{eq:firing_rate_complete_dynamic_system} we write this system in vector form as
\beq \label{eq:complete_dynamic_system_fr_oja}
\left \{
\begin{aligned}
\displaystyle \dot \fr &= - \NR \fr + \Phi\bigl(W\fr + u\bigr), \\
\displaystyle \dot W &= \HH \circ \Phi(\nu)\trasp{\Phi(\nu)} - \bigl(\SR I_n + \OR \diag{\Phi(\nu)}\diag{\Phi(\nu)}\bigr)W+\bar U,
\end{aligned}
\right.
\eeq
with initial neural and synaptic conditions $\nu_0$ and $W_0$, respectively.
\end{extendedArxiv}
\subsubsection*{\textbf{Assumptions}}
For every neuron $i$ we assume that the activation function satisfies
\begin{enumerate}[label=\textup{($A$\arabic*)}, leftmargin=1.4 cm,noitemsep]
\item\label{ass:phimax} $\displaystyle 0 \leq \phi(y_i) \leq \phimax :=\max_{i\in\until{n}}\sup_{t}\phi(y_i(t))$,
\item\label{ass:phiprime} $0\leq\phi'(y_i)\leq1$.
\setcounter{saveenum}{\value{enumi}}
\end{enumerate}
Moreover, for every $i$ and $j$, we assume that the external stimuli are such that
\begin{enumerate}[label=\textup{($A$\arabic*)}, leftmargin=1.4 cm,noitemsep]
\setcounter{enumi}{\value{saveenum}}
\item\label{ass:umax} $\displaystyle \abs{u_i(t)} \leq \umax:=\max_{i\in\{1,\dots,n\}}\sup_{t}u_i(t)$,
\item\label{ass:ubarmax} $\displaystyle \abs{\bar U_{ij}(t)} \leq \ubarmax:=\max_{i,j\in\{1,\dots,n\}}\sup_{t}\bar U_{ij}(t)$.
\end{enumerate}
\begin{rem}
The assumptions on bounded activation function and external stimuli are used to prove the forward invariance results for the coupled dynamics. While all the assumptions are used for the contraction analysis. As we will show, for the results on the firing-rate-Hebbian and firing-rate-Oja models, assumption~\ref{ass:umax} is not needed. We also remark that the assumptions are not restrictive in practice. Indeed, widely used activation functions (e.g., sigmoid) satisfy, possibly after rescaling, assumptions~\ref{ass:phimax} and~\ref{ass:phiprime}. It is also physically plausible that the external stimuli are bounded.
\end{rem}
\section{Dynamical Properties of the Models}\label{Dynamical properties of the model}
The models of Section~\ref{modeling} have $n\times n^2$ variables -- $n$ neurons and $n^2$ synaptic connections. However, synaptic connectivity in the brain is sparse compared to the number of neurons~\cite{PH-LC-XG-RM-CJH-VJW-OS:08, LK-ML-JJES-EKM:20}.
To exploit this sparsity, we propose low-dimensional reformulations of the above models. These reformulations, which leverage the out-incidence and in-incidence matrices from Section~\ref{review}, are then used to give biologically-inspired forward invariance results and to obtain sufficient conditions for non-Euclidean contractivity of the models. Finally, we show that the models fulfill Dale's principle under suitable conditions.
\subsection{Low Dimensional Reformulations}\label{sec:reformulations}
Let $m$ be the number of synaptic connections in~\eqref{eq:complete_dynamic_system}. To obtain the reduced formulation, we pick the $m$ nonzero elements of $H$, say $H_{ij}$, and the corresponding elements of $W$ and $\bar U$, say $W_{ij}$ and $\bar{U}_{ij}$. We then vectorize these elements in $\hh\in \R^m$, $w\in \R^m$ and $\bar u \in \R^m$, respectively.

We stress that, in our notation, $W_{ij}$ is the synaptic weight of the signal transmitted from a pre-synaptic
neuron $j$ to a post-synaptic neuron $i$. These connections define a graph, which has a $n\times n$ adjacency matrix having as element $(i,j)$ the weight $W_{ij}$. Therefore applying~\eqref{eq:A = BO A_m BI^T} we have $ W  = \BI \diag{w} \trasp{\BO}$, where $\BO$ and $\BI$ are defined as in~\eqref{eq:out_incidence_matrix} and~\eqref{eq:in_incidence_matrix}. Moreover, from~\eqref{eq:i_j_component_of} for each edge (i.e., synaptic connection) of the form $e=(i,j)$, we get $\bigl(\trasp{\BO} \Phi(y)\bigr)_{e} = \phi(y_j)$, and $\bigl(\trasp{\BI} \Phi(y)\bigr)_{e} = \phi(y_i)$.

Substituting the above identities in the full dimensional coupled neural-synaptic models introduced in Section~\ref{sec:coupled_neural_synaptic_model} we obtain the corresponding low dimensional reformulations.
It is worth remarking that in each case we obtain a system with $n \times m$ variables, with $m\ll n^2$, instead of a system with $n \times n^2$ variables. Specifically, we obtain:
\subsubsection{Hopfield-Hebbian Model Reformulation}
\beq \label{eq:complete_dynamic_system_nodes_edges}
\left \{
\begin{aligned}
\displaystyle \dot x &= - \NR x + \BI \diag{w} \trasp{\BO} \Phi(x) + u, \\
\displaystyle \dot w &= \hh \circ \BOH \circ \BIH - \SR w + \bar u,
\end{aligned}
\right.
\eeq
with $x(0) := x_0\in \R^n$ and $w(0) := w_0 \in \R^{m}$. The components of $w_0$ are the $m$ elements of $W_0$ having non zero $H_{ij}$'s.
\subsubsection{Firing-rate-Hebbian Model Reformulation}
\beq \label{eq:firing_rate_complete_dynamic_system_nodes_edges}
\left \{
\begin{aligned}
\displaystyle \dot \fr &= - \NR \fr + \Phi\bigl(\BI \diag{w} \trasp{\BO} \fr + u\bigr),\\
\displaystyle \dot w &= \hh \circ \BOF \circ \BIF - \SR w  + \bar u,
\end{aligned}
\right.
\eeq
with $\fr(0) := \fr_0\in \R^n$ and $w_0 \in \R^{m}$ defined consistently with the initial conditions in \eqref{eq:complete_dynamic_system_nodes_edges}.
\subsubsection{Hopfield-Oja Model Reformulation}
\beq \label{eq:complete_dynamic_system_nodes_edges_oja}
\left \{
\begin{aligned}
\displaystyle \dot x =& - \NR x + \BI \diag{w} \trasp{\BO} \Phi(x) + u, \\
\displaystyle \dot w =& \hh \circ \BOH \circ \BIH - \bigl(\SR I_m + \OR \diag{\BIH}\diag{\BIH} \bigr)w+\bar u,
\end{aligned}
\right.
\eeq
with $x_0\in \R^n$ and $w_0 \in \R^{m}$ defined consistently with the initial conditions in \eqref{eq:complete_dynamic_system_nodes_edges}.
\begin{extendedArxiv}
\subsubsection{Firing-rate-Oja Model Reformulation}
\beq \label{eq:firing_rate_complete_dynamic_system_nodes_edges_oja}
\left \{
\begin{aligned}
\displaystyle \dot \fr &= - \NR \fr + \Phi\bigl(\BI \diag{w} \trasp{\BO} \fr + u\bigr),\\
\displaystyle \dot w &= \hh \circ \BOH \circ \BIH - \bigl(\SR I_m + \OR \diag{\BIH}\diag{\BIH} \bigr)w+\bar u,
\end{aligned}
\right.
\eeq
with $\fr_0\in \R^n$ and $w_0 \in \R^{m}$ defined consistently with the initial conditions in~\eqref{eq:firing_rate_complete_dynamic_system_nodes_edges}.
\end{extendedArxiv}

\subsection{Proving Bounded Evolutions}\label{bouded evolutions}
All biological neurons eventually saturate for high input values and the synaptic weights remain bounded.
Inspired by these properties, we now investigate whether the solutions of the models of Section \ref{sec:reformulations} are bounded. To state our results we define the following sets:
\begin{align*}
\mathcal{X} &:= \{x \in\R^{n} \;|\; |x_{i}| \leq \xmax,\ i\in\until{n} \},\\
\mathcal{V} &:= \{\fr \in\R^{n} \;|\; |\fr_{i}| \leq \frmax,\ i\in\until{n} \},\\
\mathcal{W} &:= \{W \in\R^{n\times{n}} \;|\; |W_{ij}| \leq \wmax,\ i,j \in\until{n} \},
\end{align*}
where $\wmax {:=} (\hmax \phimax^2 {+} \ubarmax)/\SR$ is the maximum synaptic weight value, $\xmax{:= }(\umax+ \domax \phimax \wmax)/\NR$ is the maximum membrane potential, and $\frmax {:=} \psimax /\NR$ is the maximum firing-rate.
With the next result we show that the solutions of the Hopfield-Hebbian model have bounded evolutions when~\ref{ass:phimax},~\ref{ass:umax},~\ref{ass:ubarmax} hold.

\begin{lem}[Bounded evolutions Hopfield-Hebbian] \label{lem:bonded_ev_HH}
Consider model~\eqref{eq:complete_dynamic_system_nodes_edges} and let~\ref{ass:phimax},~\ref{ass:umax},~\ref{ass:ubarmax} hold.
Then, the set $\mathcal{X}\times\mathcal{W}$ is forward invariant and attractive, in the sense that, for every neuron $i\in \until{n}$, and every edge $e\in \until{m}$, the following inequalities hold
\begin{align}
\abs{x_i(t)} &\leq \bigl(\abs{x_i(0)} - \xmax\bigr)\e^{-\NR t} + \xmax, &t \geq 0,\label{in:bounded_evolution_x}\\
\abs{w_e(t)} &\leq \bigl(\abs{w_e(0)} - \wmax\bigr)\e^{-\SR t} + \wmax, &t \geq 0.
\label{in:bounded_evolution_w}
\end{align}
\end{lem}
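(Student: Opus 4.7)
The plan is to establish each inequality separately by converting the ODE for $|w_e(t)|$ (respectively $|x_i(t)|$) into a scalar linear differential inequality and invoking a scalar comparison lemma. I would do the synaptic bound first because it is unconditional---its forcing is already uniformly bounded by Assumptions~\ref{ass:phimax} and~\ref{ass:ubarmax}---and then use the resulting control on $|w_e(t)|$ to handle the neural bound. Starting from the scalar equation $\dot w_e = h_e(\trasp{\BO}\Phi(x))_e(\trasp{\BI}\Phi(x))_e - \SR w_e + \bar u_e$ and using the upper Dini derivative $D^+|w_e| \leq \operatorname{sign}(w_e)\,\dot w_e$, Assumption~\ref{ass:phimax} bounds each $\phi$-factor by $\phimax$, the definition~(\ref{def:hmax}) bounds $|h_e|$ by $\hmax$, and Assumption~\ref{ass:ubarmax} bounds $|\bar u_e|$ by $\ubarmax$. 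Together with $\wmax = (\hmax\phimax^2+\ubarmax)/\SR$, this gives
\[
D^+|w_e(t)| \leq -\SR|w_e(t)| + \hmax\phimax^2 + \ubarmax = -\SR\bigl(|w_e(t)| - \wmax\bigr),
\]
and the scalar comparison lemma produces~(\ref{in:bounded_evolution_w}) directly.

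For the neural equation $\dot x_i = -\NR x_i + (\BI\diag{w}\trasp{\BO}\Phi(x))_i + u_i$, Proposition~\ref{prop:BI_BO} rewrites the coupling term as $\sum_{e:\,\text{head}(e)=i}w_e\,\phi(x_{\text{tail}(e)})$, which has at most $\domax$ nonzero summands since $\domax=\norminf{\BI}$. For initial conditions in $\mathcal{X}\times\mathcal{W}$, the $w$-bound just established yields $|w_e(t)|\leq\wmax$ for all $t\geq 0$, so the coupling is bounded in magnitude by $\domax\phimax\wmax$; combining with Assumption~\ref{ass:umax}, I obtain
\[
D^+|x_i(t)| \leq -\NR|x_i(t)| + \domax\phimax\wmax + \umax = -\NR\bigl(|x_i(t)| - \xmax\bigr),
\]
and the comparison lemma delivers~(\ref{in:bounded_evolution_x}). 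Forward invariance of $\mathcal{X}\times\mathcal{W}$ is immediate from these two bounds, and attractivity follows from the fact that~(\ref{in:bounded_evolution_w}) drives $|w_e(t)|$ exponentially into $[0,\wmax]$ even from initial conditions outside $\mathcal{W}$, after which the same $x$-argument pulls the neural state into $\mathcal{X}$.

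The main technical subtlety is the nondifferentiability of $|\cdot|$ at zero, which I would handle via standard Dini derivative arguments (or equivalently by estimating $w_e^2$ and taking square roots). The only other notable point is ordering: the $w$-bound must be in place before the $x$-bound, since the coupling $\BI\diag{w}\trasp{\BO}\Phi(x)$ in the neural equation must be controlled by $\domax\phimax\wmax$ before the scalar comparison for $x_i$ can yield the clean decay rate $\NR$ and the target $\xmax$.
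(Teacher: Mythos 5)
Your proposal is correct and follows essentially the same route as the paper's own proof: bound the synaptic component first via a Dini-derivative estimate $D^+|w_e| \leq -\SR|w_e| + \hmax\phimax^2 + \ubarmax$ and the comparison lemma, then feed the resulting bound $|w_e(t)|\leq\wmax$ into the neural equation to get the analogous estimate with rate $\NR$ and target $\xmax$, concluding forward invariance and attractivity exactly as the paper does. The only cosmetic difference is that the paper computes $D^+|w_e|$ explicitly through a log-norm limit rather than via $\operatorname{sign}(w_e)\dot w_e$, which is an equivalent standard device.
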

\begin{proof}
Let $\trasp{(\trasp{\xinv}, \trasp{\winv})}$ be a solution of~\eqref{eq:complete_dynamic_system_nodes_edges} having initial conditions $\trasp{(\trasp{\xinv_0}, \trasp{\winv_0})}$ in the set $\mathcal{X}\times\mathcal{W}$. Considering the synaptic dynamics in~\eqref{eq:complete_dynamic_system_nodes_edges} written in component, for each edge $e$ we have $\dot{\winv}_e(t) = h_{e} \bigl(\BOH \bigr)_e \bigl(\BIH\bigr)_e - \SR \winv_e(t) + \bar u_e$, for all $t \geq 0$.
Since~\ref{ass:phimax} and~\ref{ass:ubarmax} hold and $\trasp{\BO}$ and $\trasp{\BI}$ are unit column sum matrices, we get the upper bound
\beq \label{proof_boundness:din_w}
\dot{\winv}_e(t) \le \hmax\phimax^2 - c_s \winv_e(t) + \ubarmax,
\eeq 
Next, let $v(t) := \abs{\winv_e(t)}$, for all $t \geq 0$, with $v(0) = \abs{\winv_e(0)}$.
From~\eqref{proof_boundness:din_w} we get
\begin{align*}
{D^{+}\abs{\winv_e(t)}} &= \limsup_{k \to 0^{+}} \frac{1}{k}\bigl(\abs{\winv_e(t+k)}-\abs{\winv_e(t)}\bigr)\\
&\leq \limsup_{k \to 0^{+}} \frac{\abs{\winv_e(t) + k\bigl(\hmax\phimax^2 -\SR\winv_e(t) + \ubarmax\bigr)} -\abs{\winv_e(t)}}{k}\\
&\leq \limsup_{k \to 0^{+}} \frac{\norm{I_m - k\SR I_m}-1}{k}\abs{\winv_e(t)} + \hmax\phimax^2 + \ubarmax\\
&= \mu(-\SR I_m) \abs{\winv_e(t)} + \hmax\phimax^2 + \ubarmax\\
&= -\SR\abs{\winv_e(t)} + \hmax\phimax^2 + \ubarmax.
\end{align*}
Therefore $D^{+}\abs{\winv_e(t)} \leq -\SR\abs{\winv_e(t)} + \hmax\phimax^2 + \ubarmax$, for all $t \geq 0$. Consider the function $u: \R_{\geq 0} \to \R$ and define the differential equation
\[
\dot u(t) = g(u,t) := -\SR u(t) + \hmax\phimax^2+\ubarmax, \quad u(0) = \abs{\winv_e(0)}.
\]
Its solution is $u(t) = (\abs{\winv_e(0)} - \wmax)\e^{-\SR t} + \wmax$, where $\wmax := \left(\hmax\phimax^2 + \ubarmax\right)/\SR$.
Applying the comparison Lemma~\cite[pp.~102-103]{HKK:02} we have $v(t) \leq u(t)$, for all $t \geq 0$, i.e.,
\beq \label{eq:w_comparison_lemma}
\abs{\winv_e(t)} \leq \bigl(\abs{\winv_e(0)} - \wmax \bigr)\e^{-\SR t} + \wmax.
\eeq
Being $\winv_e(0) \in \mathcal{W}$ we get $\abs{\winv_e(t)} \leq \wmax$, hence $\winv_e(t) \in \mathcal{W}$, for all $t \ge 0$ and edges $e$.
Moreover, considering the neural dynamics in~\eqref{eq:complete_dynamic_system_nodes_edges} written in component, for each $i$ we have $\dot{\xinv}_i  = - \NR \xinv_i + (\BI \diag{w} \trasp{\BO} \Phi(\xinv))_i + u_i$.
By assumption~\ref{ass:phimax}, Definition~\eqref{def:hmax} and having proved that $\winv_e(t) \in \mathcal{W}$, for all $t \geq 0$ and $e\in \until{m}$, we get
$\dot{\xinv}_i(t) \leq \umax+ \domax \phimax \wmax- \NR \xinv_i(t)$. Hence, following steps similar to the ones we used to upper bound the synaptic dynamics, we have
\beq\label{eq:x_comparison_lemma}
\abs{\xinv_i(t)} \leq \bigl(\abs{\xinv_i(0)} - \xmax\bigr)\e^{-\NR t} + \xmax,
\eeq
where $\xmax := (\umax+ \domax \phimax \wmax)/\NR$.
Being $\xinv_i(0) \in \mathcal{X}$ we get $\abs{\xinv_i(t)} \leq \xmax$. Hence $\xinv_i(t) \in \mathcal{X}$, for all $t \geq 0$ and for all $i$.
Thus we have that the trajectories of any solution $\trasp{(\trasp{\xinv}, \trasp{\winv})}$ of~\eqref{eq:complete_dynamic_system_nodes_edges} having initial conditions in the set $\mathcal{X}\times\mathcal{W}$ remain in this set, which therefore is forward invariant. Finally, to prove that the set is attractive, we observe that as $t\to+\infty$ conditions~\eqref{eq:w_comparison_lemma} and~\eqref{eq:x_comparison_lemma} are verified for all initial conditions $\trasp{(\trasp{x_0}, \trasp{w_0})}$, not only for that starting in $\mathcal{X}\times\mathcal{W}$. Thus, inequalities~\eqref{in:bounded_evolution_x} and~\eqref{in:bounded_evolution_w} hold and according to Definition~\ref{attractive}, the set $\mathcal{X}\times\mathcal{W}$ is attractive.
\end{proof}
\vspace{-1 mm}
Next, we consider the firing-rate-Hebbian model and show that it exhibits bounded evolution of the solutions if~\ref{ass:phimax} and~\ref{ass:ubarmax} hold.
\begin{lem}[Bounded evolutions firing-rate-Hebbian]\label{lem:bonded_ev_FH}
Consider model~\eqref{eq:firing_rate_complete_dynamic_system_nodes_edges} and let~\ref{ass:phimax},~\ref{ass:ubarmax} hold. Then, the set $\mathcal{V}\times\mathcal{W}$ is forward invariant and attractive, in the sense that, for every neuron $i\in \until{n}$, and every edge $e\in \until{m}$, the following inequalities hold
\begin{align}
\abs{\nu_i(t)} & \leq \bigl(\abs{\nu_i(0)} - \frmax\bigr)\e^{-\NR t} + \frmax, &t \geq 0, \label{eq:bounded_fr1}\\
\abs{w_e(t)}  & \leq \bigl(\abs{w_e(0)}  - \wmax\bigr)\e^{-\SR t} + \wmax, &t \geq 0 \label{eq:bounded_fr2}.
\end{align}
\end{lem}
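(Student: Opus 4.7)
The plan is to mirror the structure of the proof of Lemma~\ref{lem:bonded_ev_HH}, since the synaptic equation in~\eqref{eq:firing_rate_complete_dynamic_system_nodes_edges} has exactly the same form as the one in~\eqref{eq:complete_dynamic_system_nodes_edges}, and the firing-rate equation is actually easier to handle than the Hopfield one because the activation $\phi$ bounds the nonlinearity directly.

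First, I would fix a solution $\trasp{(\trasp{\vinv},\trasp{\winv})}$ of~\eqref{eq:firing_rate_complete_dynamic_system_nodes_edges} with initial conditions in $\mathcal{V}\times\mathcal{W}$ and address the synaptic component. Writing the $w$-equation componentwise, for each edge $e$
\[
\dot{\winv}_e = h_e\,(\trasp{\BO}\Phi(\vinv))_e\,(\trasp{\BI}\Phi(\vinv))_e - \SR \winv_e + \bar u_e,
\]
and using~\ref{ass:phimax},~\ref{ass:ubarmax} together with the unit-column-sum property of $\trasp{\BO}$ and $\trasp{\BI}$, I obtain $\dot{\winv}_e \le \hmax\phimax^2 - \SR\winv_e + \ubarmax$. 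At this point the Dini-derivative argument used in the proof of Lemma~\ref{lem:bonded_ev_HH} applies verbatim: bound $D^{+}|\winv_e|$ by $-\SR|\winv_e| + \hmax\phimax^2 + \ubarmax$ using $\mu(-\SR I_m)=-\SR$, and apply the comparison lemma against the scalar ODE $\dot u = -\SR u + \hmax\phimax^2+\ubarmax$ with initial condition $|\winv_e(0)|$. This yields~\eqref{eq:bounded_fr2} and, since $\winv_e(0)\in\mathcal{W}$, forward invariance of the $w$-component.

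For the neural component I would exploit the key structural difference with respect to the Hopfield case: the external stimulus $u$ enters inside $\phi$, so by~\ref{ass:phimax} alone (no need for~\ref{ass:umax}) every coordinate of $\Phi(\BI\diag{w}\trasp{\BO}\vinv+u)$ is bounded above by $\phimax$ and below by $0$. Writing $\dot{\vinv}_i = -\NR\vinv_i + \phi(\cdot)$, I would again apply the Dini-derivative/log-norm estimate $D^{+}|\vinv_i| \le \mu(-\NR)|\vinv_i| + \phimax = -\NR|\vinv_i|+\phimax$, and the comparison lemma with threshold $\frmax = \phimax/\NR$ delivers~\eqref{eq:bounded_fr1}. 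Forward invariance of $\mathcal{V}$ follows because $|\vinv_i(0)|\le\frmax$ implies $|\vinv_i(t)|\le\frmax$ for all $t\ge 0$.

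Finally, for attractiveness I would note, as in Lemma~\ref{lem:bonded_ev_HH}, that the bounds~\eqref{eq:bounded_fr1}--\eqref{eq:bounded_fr2} were derived without using the hypothesis that the initial condition lies in $\mathcal{V}\times\mathcal{W}$, and letting $t\to+\infty$ gives convergence of any trajectory to this set, so Definition~\ref{attractive} applies. The only subtlety worth emphasizing, and therefore the one to flag explicitly in the proof, is the absence of~\ref{ass:umax} among the hypotheses: unlike in the Hopfield-Hebbian case, the firing-rate contribution $\umax + \domax\phimax\wmax$ is replaced by the single quantity $\phimax$, reflecting the fact that the unbounded linear drive $Wx+u$ is now shielded by the bounded activation function.
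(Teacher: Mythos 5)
Your proposal is correct and matches the paper's approach: the paper omits this proof, stating only that it follows the same steps as Lemma~\ref{lem:bonded_ev_HH}, and your argument carries out exactly that adaptation, including the correct observation that the bound $0\le\phi\le\phimax$ shields the neural equation so that~\ref{ass:umax} is not needed and $\frmax=\phimax/\NR$ emerges as the threshold. Nothing further to add.
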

\begin{proof}
The proof, which follows similar steps to the one given for Lemma~\ref{lem:bonded_ev_HH}, is omitted here for brevity.
\end{proof}
Then, we give the following result for the bounded evolution of the solutions of the Hopfield-Oja model.
\begin{lem}[Bounded evolutions Hopfield-Oja]
\label{lem:bonded_ev_HO}
Consider model~\eqref{eq:complete_dynamic_system_nodes_edges_oja} and let~\ref{ass:phimax},~\ref{ass:umax} and~\ref{ass:ubarmax} hold. Then, the set $\mathcal{X}\times\mathcal{W}$ is forward invariant and attractive in the sense that, for every neuron $i\in\until{n}$, and every edge $e\in\until{m}$, inequalities~\eqref{in:bounded_evolution_x} and~\eqref{in:bounded_evolution_w} hold.
\end{lem}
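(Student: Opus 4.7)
The plan is to mirror the proof of Lemma~\ref{lem:bonded_ev_HH} and exploit the fact that the extra Oja-like term $\OR \diag{\BIH}\diag{\BIH} w$ in the synaptic dynamics of~\eqref{eq:complete_dynamic_system_nodes_edges_oja} is a sign-preserving additional damping on $w$, so it can only tighten the bound. Concretely, starting from a solution $\trasp{(\trasp{\xinv}, \trasp{\winv})}$ with initial condition in $\mathcal{X}\times\mathcal{W}$, I write the synaptic equation componentwise: for each edge $e$ with head $i$,
\[
\dot{\winv}_e = h_e\bigl(\BIH\bigr)_e\bigl(\BOH\bigr)_e - \bigl(\SR + \OR\phi^2(x_i)\bigr)\winv_e + \bar u_e.
\]

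Next, I compute the upper Dini derivative $D^+\abs{\winv_e}$ exactly as in the proof of Lemma~\ref{lem:bonded_ev_HH}. Because $\OR\phi^2(x_i)\ge 0$, the term $-\OR\phi^2(x_i)\winv_e$ satisfies $-\OR\phi^2(x_i)\winv_e\cdot\mathrm{sign}(\winv_e) = -\OR\phi^2(x_i)\abs{\winv_e}\le 0$, so under~\ref{ass:phimax} and~\ref{ass:ubarmax} I obtain the same majorant as in the Hebbian case, namely
\[
D^+\abs{\winv_e(t)} \leq -\SR\abs{\winv_e(t)} + \hmax\phimax^2 + \ubarmax.
\]
Applying the comparison lemma against $\dot u = -\SR u + \hmax\phimax^2 + \ubarmax$ with $u(0)=\abs{\winv_e(0)}$ yields inequality~\eqref{in:bounded_evolution_w}, so $\winv_e(t)\in\mathcal{W}$ for all $t\ge 0$ whenever $\winv(0)\in\mathcal{W}$.

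With the synaptic bound $\abs{\winv_e(t)}\le\wmax$ in hand, the neural equation of~\eqref{eq:complete_dynamic_system_nodes_edges_oja} is identical to that of~\eqref{eq:complete_dynamic_system_nodes_edges}, so repeating verbatim the neural part of the argument of Lemma~\ref{lem:bonded_ev_HH} (using~\ref{ass:phimax}, \ref{ass:umax}, Proposition~\ref{prop:BI_BO}(iii), and the definition of $\domax$) produces inequality~\eqref{in:bounded_evolution_x} and hence $\xinv_i(t)\in\mathcal{X}$ for all $t\ge 0$. This gives forward invariance of $\mathcal{X}\times\mathcal{W}$; attractiveness follows because the two comparison-based inequalities hold for \emph{any} initial condition $\trasp{(\trasp{x_0},\trasp{w_0})}\in\R^n\times\R^m$, and the exponential terms vanish as $t\to\infty$, so every trajectory converges to $\mathcal{X}\times\mathcal{W}$ in the sense of Definition~\ref{attractive}.

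I do not expect a serious obstacle: the only delicate step is justifying that the sign of the extra term $-\OR\phi^2(x_i)\winv_e$ never worsens the Dini derivative of $\abs{\winv_e}$, and this is immediate from the factor $\OR\phi^2(x_i)\ge 0$. In fact, one could obtain a sharper bound of rate $\SR + \OR\inf_t\phi^2(x_i(t))$, but since the statement targets the same $\wmax = (\hmax\phimax^2+\ubarmax)/\SR$ as the Hebbian case, discarding this extra damping is the cleanest path and yields exactly inequalities~\eqref{in:bounded_evolution_x}–\eqref{in:bounded_evolution_w}.
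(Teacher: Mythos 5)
Your proof is correct and follows essentially the same route as the paper: both reduce to the Hopfield--Hebbian argument by showing the extra Oja term contributes only non-positive additional damping to $D^{+}\abs{\winv_e}$, so the same majorant $-\SR\abs{\winv_e}+\hmax\phimax^2+\ubarmax$ and hence the same comparison-lemma bounds apply. The only cosmetic difference is that the paper phrases this via the translation property of the log norm of $-\SR I_m - \OR\diag{\BIH}\diag{\BIH}$, whereas you argue the sign of the scalar term directly.
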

\begin{proof}
The proof, which follows a reasoning similar to the proof of Lemma~\ref{lem:bonded_ev_HH}, is obtained once the following upper bound for $D^{+}\abs{\winv_e(t)}$ is established
\begin{align*}
D^{+}\abs{\winv_e(t)} \leq \mu\bigl(-\SR I_m - \OR \diag{\BIH}\diag{\BIH}\bigr)\abs{\winv_e(t)} + \hmax \phimax^2 +\ubarmax.
\end{align*}
Applying the {translation property of the log norm (i.e., $\mu\bigl(A+cI_n\bigr) = \mu\bigl(A\bigr) + c, \forall A\in\R^{n\times n}, c\in \R)$} and noticing that $ - \mu\bigl(\OR \diag{\BIH}\diag{\BIH}\bigr)\leq 0$, we get $ D^{+}\abs{\winv_e(t)} \leq \hmax \phimax^2 +\ubarmax -\SR\abs{\winv_e(t)}$, for all $t \geq 0$. The desired results then follow.
\end{proof}
\begin{extendedArxiv}
Finally, we show that firing-rate-Oja model exhibits bounded evolution of the solutions if~\ref{ass:phimax} and~\ref{ass:ubarmax} hold.
\begin{lem}[Bounded evolutions firing-rate-Oja]
\label{lem:bonded_ev_FO}
Consider model~\eqref{eq:firing_rate_complete_dynamic_system_nodes_edges_oja} and let~\ref{ass:phimax},~\ref{ass:ubarmax} hold. Then, the set $\mathcal{V}\times\mathcal{W}$ is forward invariant and attractive, in the sense that, for every neuron $i\in \until{n}$, and every edge $e\in \until{m}$, inequalities~\eqref{eq:bounded_fr1} and~\eqref{eq:bounded_fr2} hold.
\end{lem}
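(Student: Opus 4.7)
The plan is to combine the Oja-style log-norm simplification from the proof of Lemma~\ref{lem:bonded_ev_HO} with the observation from the proof of Lemma~\ref{lem:bonded_ev_FH} that, in firing-rate models, assumption~\ref{ass:umax} is not needed because $u$ enters inside the activation function. Taking a solution $\trasp{(\trasp{\tilde \fr}, \trasp{\tilde w})}$ of~\eqref{eq:firing_rate_complete_dynamic_system_nodes_edges_oja} with initial condition in $\mathcal{V}\times\mathcal{W}$, I would first handle the synaptic equation edge by edge.

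For each edge $e$, by~\ref{ass:phimax} and~\ref{ass:ubarmax} together with the fact that $\trasp{\BO}$ and $\trasp{\BI}$ have unit column sums, the driving term $h_e\bigl(\trasp{\BO}\Phi(\fr)\bigr)_e\bigl(\trasp{\BI}\Phi(\fr)\bigr)_e + \bar u_e$ is bounded in absolute value by $\hmax\phimax^2 + \ubarmax$. Following the upper Dini derivative computation carried out in the proof of Lemma~\ref{lem:bonded_ev_HO}, this yields
\[
D^{+}\abs{\tilde w_e(t)} \leq \mu\bigl(-\SR I_m - \OR \diag{\BIF}\diag{\BIF}\bigr)\abs{\tilde w_e(t)} + \hmax\phimax^2 + \ubarmax.
\]
By the translation property of the log norm and the fact that $-\OR \diag{\BIF}\diag{\BIF}$ is diagonal with non-positive entries (so its log norm is $\leq 0$), the right-hand side simplifies to $-\SR\abs{\tilde w_e(t)} + \hmax\phimax^2 + \ubarmax$. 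The comparison lemma then produces~\eqref{eq:bounded_fr2} with $\wmax = (\hmax\phimax^2 + \ubarmax)/\SR$.

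For the neural dynamics the argument is even shorter. Written componentwise we have $\dot{\tilde \fr}_i = -\NR \tilde \fr_i + \phi\bigl((\BI \diag{w} \trasp{\BO} \tilde \fr + u)_i\bigr)$, and by~\ref{ass:phimax} the activation term is bounded by $\phimax$ regardless of its argument -- this is exactly why~\ref{ass:umax} is not needed. Repeating the Dini-derivative-plus-comparison-lemma pattern gives~\eqref{eq:bounded_fr1} with $\frmax = \phimax/\NR$. Forward invariance of $\mathcal{V}\times\mathcal{W}$ follows by noting that $|\tilde \fr_i(0)| \leq \frmax$ and $|\tilde w_e(0)| \leq \wmax$ together with the bounds imply $|\tilde \fr_i(t)| \leq \frmax$ and $|\tilde w_e(t)| \leq \wmax$ for all $t \geq 0$; attractivity follows because the same exponential bounds hold irrespective of the initial condition, forcing convergence into $\mathcal{V}\times\mathcal{W}$ as $t \to \infty$, in line with Definition~\ref{attractive}.

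There is no genuinely hard step: the proof is a bookkeeping exercise reusing two ingredients already developed in the paper. The only place where a little care is needed is the justification that the Oja contribution $-\OR\diag{\BIF}\diag{\BIF}$ can be discarded when upper-bounding $D^{+}\abs{\tilde w_e(t)}$; this is valid because the matrix is diagonal with non-positive entries and the log norm is translation-compatible, so its contribution is non-positive and only strengthens the decay. Accordingly, the statement of the lemma can be verified by a direct transcription of the arguments in Lemmas~\ref{lem:bonded_ev_HO} and~\ref{lem:bonded_ev_FH}.
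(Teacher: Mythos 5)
Your proposal is correct and follows exactly the route the paper intends: the paper omits this proof, stating only that it follows the steps of Lemma~\ref{lem:bonded_ev_HO}, and your writeup is a faithful transcription of that argument (Dini derivative plus comparison lemma for each edge, discarding the non-positive Oja contribution via the translation property of the log norm, and bounding the neural equation by $\phimax$ so that~\ref{ass:umax} is not needed). No gaps; the constants $\wmax = (\hmax\phimax^2+\ubarmax)/\SR$ and $\frmax = \phimax/\NR$ match the paper's definitions.
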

\begin{proof}
The proof, which follows similar steps to the one given for Lemma~\ref{lem:bonded_ev_HO}, is omitted here for brevity.
\end{proof}

\begin{rem}
Lemma~\ref{lem:bonded_ev_HH},~\ref{lem:bonded_ev_FH},~\ref{lem:bonded_ev_HO},~\ref{lem:bonded_ev_FO} ensure that the synaptic rule of each of our models fulfills the boundedness property. This is a desirable property for realistic neural network models, see e.g.~\cite{WG-WK:02}.
\end{rem}
\end{extendedArxiv}
\subsection{Showing Contractivity of the Models}
We now investigate contractivity of the models of Section \ref{modeling}. As we shall see, we give sufficient conditions for strong infinitesimal contractivity (Definition~\ref{def:contracting_system}) by leveraging suitably-defined hierarchical norms. For each model, we propose a contractivity test that depends upon biologically meaningful quantities, such as the neural and the synaptic decay rate, the maximum out-degree, and the maximum synaptic strength. 

Our first result of this Section gives a sufficient condition for the contractivity of the Hopfield-Hebbian model.
\begin{thm}[Strong infinitesimal contractivity of the Hopfield-Hebbian model]
\label{contractivity_hop_heb}
Consider the coupled Hopfield-Hebbian model~\eqref{eq:complete_dynamic_system_nodes_edges} and let~\ref{ass:phimax} --~\ref{ass:ubarmax} hold.
Further, assume that:
\beq\label{eq:conditon_cn_cs}
\NR\SR > 3\domax \hmax\phimax^2 +\domax\ubarmax.
\eeq
 Then, the dynamics~\eqref{eq:complete_dynamic_system_nodes_edges} is strongly infinitesimally contracting on $\mathcal{X}\times\mathcal{W}$ with respect to the norm $\norm{\left[\norm{x}_\infty,\norm{W}_{\infty}\right]}_{p,[\eta]}$, for any $p\in[1,\infty]$ and where $\eta$ is some positive vector. Moreover, the contraction rate is at least
\[
\contrateHH = -\frac{\constHH + \gh -\sqrt{\left(\constHH + \gh \right)^2 - 4\gh\SR^2}}{2\SR},
\]
where $\bmax := \domax \hmax \phimax^2$, $\constHH := \SR^2 + 2\bmax$ , and $\gh := \NR\SR - 3 \bmax-\domax\ubarmax$.
\end{thm}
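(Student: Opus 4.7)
The plan is to combine the block structure of the vectorized dynamics~\eqref{eq:complete_dynamic_system_nodes_edges} with the composite-norm machinery of Theorem~\ref{thm:composite-norm-majorants} and Lemma~\ref{lem:eta}.

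First, I would compute the Jacobian of~\eqref{eq:complete_dynamic_system_nodes_edges} with respect to the stacked state $(x,w)\in\R^{n+m}$. Ordering the variables as $(x,w)$ the Jacobian has the $2\times 2$ block form
\[
J(x,w)=\begin{pmatrix}J_{xx}&J_{xw}\\ J_{wx}&J_{ww}\end{pmatrix},
\]
with $J_{xx}=-\NR I_n+\BI\diag{w}\trasp{\BO}\diag{\Phi'(x)}$, $J_{xw}=\BI\diag{\BOH}$, $J_{ww}=-\SR I_m$, and $J_{wx}$ obtained by differentiating the Hebbian product $\hh\circ\BOH\circ\BIH$; each row of $J_{wx}$ has at most two nonzero entries (in the columns indexed by the tail and head of the corresponding edge).

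Next, I would equip the $x$- and $w$-blocks with local norms $\norm{\cdot}_\infty$ on $\R^n$ and $\R^m$ respectively, and use an aggregating weighted $\ell_p$ norm on $\R^2$ with a positive weight vector to be chosen later. By Theorem~\ref{thm:composite-norm-majorants}\ref{majorant:Strom-bounds}, the composite log norm of $J(x,w)$ is dominated by $\lognorm{\JacMTildeHH}{p,\eta}$, where $\JacMTildeHH\in\R^{2\times 2}$ is the aggregate Metzler majorant. The key point is to bound the entries of $\JacMTildeHH$ uniformly on the forward invariant set $\mathcal{X}\times\mathcal{W}$ guaranteed by Lemma~\ref{lem:bonded_ev_HH}. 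Using $\phi'\le 1$, $|w_e|\le\wmax$, $\phi\le\phimax$ together with $\norminf{\trasp{\BO}}=\norminf{\trasp{\BI}}=1$ and $\norminf{\BI}=\domax$, standard submultiplicativity and row-sum estimates yield the entry-wise inequality
\[
\JacMTildeHH\le\begin{pmatrix}-\NR+\domax\wmax & \domax\phimax\\ 2\hmax\phimax & -\SR\end{pmatrix}=:M_\star,
\]
with $\wmax=(\hmax\phimax^2+\ubarmax)/\SR$. Both sides are Metzler, so the weighted $\ell_p$ log norm is monotone in entries and the composite log norm of $J$ inherits the bound from $M_\star$.

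The third step extracts $\alpha(M_\star)$ from the $2\times 2$ characteristic polynomial. Substituting the expression for $\wmax$ into the trace and determinant, a direct computation gives $\det(M_\star)=\gh$ and $-\SR\,\tr(M_\star)=\constHH+\gh$, so the eigenvalues of $M_\star$ solve $\SR\lambda^2+(\constHH+\gh)\lambda+\gh\SR=0$. Hypothesis~\eqref{eq:conditon_cn_cs} forces $\gh>0$, hence both roots are real and negative, and the dominant one can be identified with $-\contrateHH$. Applying Lemma~\ref{lem:eta}(i) to $M_\star$, for any $\eps>0$ I would pick $p\in[1,\infty]$ and $\eta=\eta_{M_\star,p,\delta}$ with $\delta$ small enough that $\lognorm{M_\star}{p,\eta}\le\alpha(M_\star)+\eps=-\contrateHH+\eps$; chaining bounds yields $\lognorm{J(x,w)}{\composite}\le-\contrateHH+\eps$ uniformly on $\mathcal{X}\times\mathcal{W}$, and sending $\eps\to 0^+$ delivers strong infinitesimal contractivity with rate at least $\contrateHH$ in the sense of Definition~\ref{def:contracting_system}. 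The main obstacle is this third step: the algebraic matching of the dominant root of a $2\times 2$ quadratic with the closed-form $\contrateHH$ is mostly bookkeeping, but both the nonnegativity of the discriminant $(\constHH+\gh)^2-4\gh\SR^2$ and the negativity of that root hinge directly on $\gh>0$, i.e., on the biologically meaningful assumption~\eqref{eq:conditon_cn_cs}.
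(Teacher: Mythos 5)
Your proposal is correct and follows essentially the same route as the paper: the same Jacobian block decomposition, the same local $\ell_\infty$ norms with an aggregate Metzler majorant bounded entrywise by the constant $2\times 2$ matrix $\JacMTildeHH$, and the same trace/determinant Hurwitz analysis under~\eqref{eq:conditon_cn_cs}. The only cosmetic difference is in the last step, where you invoke Lemma~\ref{lem:eta}(i) with an $\eps\to0^+$ limit (which, strictly speaking, yields rate $\contrateHH-\eps$ in a norm depending on $\eps$), whereas the paper notes that $\JacMTildeHH$ is irreducible and uses Lemma~\ref{lem:eta}(ii) to obtain $\lognorm{\JacMTildeHH}{p,[\eta]}=\alpha(\JacMTildeHH)$ exactly for a single fixed $\eta$.
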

\begin{proof}
Let us consider the low dimensional formulation of the coupled Hopfield-Hebbian model~\eqref{eq:complete_dynamic_system_nodes_edges} satisfying assumptions~~\ref{ass:phimax} --~\ref{ass:ubarmax}.
Its Jacobian is
$
J(x,w) :=
\left[
\begin{array}{c|c}
\subscr{J}{nn} & \subscr{J}{ns} \\
\hline
\subscr{J}{sn} & \subscr{J}{ss}
\end{array}
\right],
$
where, defining $\subscr{f}{n} := - \NR x + \BI \diag{w} \trasp{\BO} \Phi(x) + u$, and $\subscr{f}{s} := \hh \circ \BOH \circ \BIH - \SR w + \bar u$, we have 
\begin{align*} 
\subscr{J}{nn} &= \derp{\subscr{f}{n}}{x}= - \NR I_n+ \BI \diag{w} \trasp{\BO} \diag{\Phi'(x)},\\
\subscr{J}{ns} &= \derp{\subscr{f}{n}}{w} = \derp{\bigl(\BI \diag{\trasp{\BO} \Phi(x)}w\bigr)}{w} = \BI \diag{\trasp{\BO} \Phi(x)},\\
\subscr{J}{sn} &= \derp{\subscr{f}{s}}{x}= \diag{\hh}\bigl(\diag{ \BOH}\trasp{\BI} + \diag{\BIH}\trasp{\BO} \bigr) \diag{\Phi'(x)},\\
\subscr{J}{ss} &= \derp{\subscr{f}{s}}{w}= - \SR I_m.
\end{align*}
We consider the infinity norm both on $\R^n$ and $\R^m$ and we define the aggregate Metzler majorant of the matrix $J(x,w)$:
\beq\label{aggregate_Metzler_majorant}
\JacM =
\left[
\begin{array}{cc}
\mu_{\infty}(\subscr{J}{nn}) & \norminf{\subscr{J}{ns}} \\
\norminf{\subscr{J}{sn}} & \mu_{\infty}(\subscr{J}{ss})
\end{array}
\right].
\eeq
{For any $p\in[1,\infty]$ and for $\eta>0$ as in Lemma~\ref{lem:eta} we consider the aggregation norm $\|\cdot\|_{\agg} = \|\cdot\|_{p,[\eta]}$.}
From Theorem~\ref{thm:composite-norm-majorants} we get that for any $p\in[1,\infty]$ and for $\eta>0$ we have
\[
\lognorm{J(x,W)}{\composite} \leq \lognorm{\JacM}{p,[\eta]}.
\]

From Proposition~\ref{prop:BI_BO} it follows $\norminf{\BI} = \domax$ and $\norminf{\trasp{\BI}} = \norminf{\trasp{\BO}}=1$. Also, being ${\norminf{\diag{\Phi'(x)}}\leq1}$, $\norminf{\Phi(x)} = \phimax$ and $|W_{ij}| \leq (\hmax \phimax^2 + \ubarmax)/\SR$, $\forall i,j\in\until{n}$, we upper bound:
\begin{align*}
\norminf{\subscr{J}{ns}}
&\leq \norminf{\BI} \norminf{\diag{\BOH}} \leq \domax \phimax,\\
\norminf{\subscr{J}{sn}}& \leq \norminf{\diag{\hh}} \norminf{\diag{\trasp{\BO}\Phi(x)}\trasp{\BI} + \diag{\trasp{\BI}\Phi(x)}\trasp{\BO}} \leq 2\hmax\phimax,\\
\mu_{\infty}(\subscr{J}{nn}) 
&= -\NR + \mu_{\infty}\bigl(\BI \diag{w} \trasp{\BO} \diag{\Phi'(x)}\bigr) \leq  - \NR + \norminf{\BI \diag{w} \trasp{\BO} \diag{\Phi'(x)}}\\
&\leq  \domax \bigl(\hmax \phimax^2 + \ubarmax\bigr)/\SR - \NR,\\ \mu_{\infty}(\subscr{J}{ss})
&= -\SR.
\end{align*}
Hence:
\begin{align*}
\JacM &\leq
\begin{bmatrix}
\displaystyle \frac{\domax \bigl(\hmax \phimax^2 + \ubarmax\bigr)}{\SR}-\NR & \domax \phimax \\
2\hmax\phimax & -\SR
\end{bmatrix} := \JacMTildeHH.
\end{align*}
{Applying the monotonicity property of the log norm of a Metzler matrix, and being $\JacMTildeHH$ an irreducible Metzler matrix, from Lemma~\ref{lem:eta} we have:}
$
\lognorm{\JacM}{p,[\eta]} \leq \lognorm{\JacMTildeHH}{p,[\eta]} = \alpha(\JacMTildeHH).
$

Finally, the last step is to find conditions for which the matrix $\JacMTildeHH$, and thus $\JacM$, is Hurwitz. In our case, being $\JacMTildeHH$ a $2\times 2$ matrix this happens if and only if
\begin{align} \label{eq:cond_hurwitz_det}
\det{(\JacMTildeHH)} = \NR\SR - 3\domax\hmax\phimax^2 -\domax\ubarmax>0,
\end{align}
and $\tr(\JacMTildeHH)= \domax (\hmax \phimax^2 + \ubarmax)/\SR - \NR - \SR <0$, i.e.,
\beq\label{eq:cond_hurwitz_trace}
\NR\SR > -\SR^2 + \domax(\hmax \phimax^2 + \ubarmax).
\eeq
Now, since condition~\eqref{eq:cond_hurwitz_det} implies~\eqref{eq:cond_hurwitz_trace}, $\JacMTildeHH$ is Hurwitz if and only if condition~\eqref{eq:cond_hurwitz_det}, i.e., condition~\eqref{eq:conditon_cn_cs}, is verified and computing the spectral abscissa yields (see Appendix)
\beq\label{spectral_abscissa}
\begin{split}
\alpha(&\JacMTildeHH) :=\contrateHH = \frac{ \sqrt{\left(\constHH + \gh \right)^2 - 4\gh\SR^2}-\constHH - \gh}{2\SR},
\end{split}
\eeq
where $\bmax := \domax \hmax \phimax^2$, $\constHH := \SR^2 + 2\bmax$ , and $\gh := \NR\SR - 3 \bmax-\domax\ubarmax$.
Hence, if \eqref{eq:conditon_cn_cs} is satisfied, then, from Definition~\ref{def:contracting_system}, we have that the coupled neural synaptic dynamics~\eqref{eq:complete_dynamic_system_nodes_edges} is strongly infinitesimally contracting and its contraction rate is at least $\contrateHH$. This proves the result.
\end{proof}

With the next result, we give a sufficient condition for the contractivity of the firing-rate-Hebbian model.

\begin{thm}[Strong infinitesimal contractivity of the firing-rate-Hebbian model]
Consider model~\eqref{eq:firing_rate_complete_dynamic_system_nodes_edges} and let~\ref{ass:phimax},~\ref{ass:phiprime} and~\ref{ass:ubarmax} hold.
Further, assume that:
\beq\label{eq:conditon_cn_cs_fr}
\NR\SR > \domax \hmax\phimax^2\Bigl(1 + \frac{2}{\NR}\Bigr) +\domax\ubarmax.
\eeq
Then, the dynamics~\eqref{eq:firing_rate_complete_dynamic_system_nodes_edges} is strongly infinitesimally contracting on $\mathcal{V}\times\mathcal{W}$ with respect to the norm $\norm{\left[\norm{\nu}_\infty,\norm{W}_{\infty}\right]}_{p,[\eta]}$, for any $p\in[1,\infty]$ and where $\eta$ is some positive vector. Moreover, the contraction rate is at least
\beq\label{spectral_abscissa_fr}
\contrateFH = -\frac{\constFH + \gfr - \sqrt{\left(\constFH + \gfr \right)^2 - 4\gfr\SR^2}}{2\SR},
\eeq
where $\displaystyle \gfr := \NR\SR - \amax\left( 1 + {2}/{\NR}\right)- \ubarmax\domax$ and $\displaystyle \constFH := \SR^2 + {2\amax}/{\NR}$.
\end{thm}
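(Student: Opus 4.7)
The plan is to mirror the proof of Theorem~\ref{contractivity_hop_heb}: compute the Jacobian of the low-dimensional firing-rate-Hebbian model~\eqref{eq:firing_rate_complete_dynamic_system_nodes_edges}, form its aggregate Metzler majorant $\JacMTildeFH$ with respect to the infinity norm on both $\R^n$ and $\R^m$, then show that $\JacMTildeFH$ is Hurwitz under hypothesis~\eqref{eq:conditon_cn_cs_fr} and read off the contraction rate from its spectral abscissa.

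First I would compute the four Jacobian blocks. Letting $z := \BI\diag{w}\trasp{\BO}\nu + u$ denote the argument of $\Phi$ in the neural equation, the chain rule yields $\subscr{J}{nn} = -\NR I_n + \diag{\Phi'(z)}\BI\diag{w}\trasp{\BO}$ and $\subscr{J}{ns} = \diag{\Phi'(z)}\BI\diag{\trasp{\BO}\nu}$, while the synaptic dynamics coincide with those of the Hopfield-Hebbian model, so $\subscr{J}{sn}$ and $\subscr{J}{ss}=-\SR I_m$ are identical to their counterparts in Theorem~\ref{contractivity_hop_heb}. To bound the blocks I would combine Assumption~\ref{ass:phimax} (giving $0\le\Phi\le\phimax\1_n$), Assumption~\ref{ass:phiprime} (giving $\norminf{\diag{\Phi'(\cdot)}}\le 1$), Proposition~\ref{prop:BI_BO} (giving $\norminf{\BI}=\domax$ and $\norminf{\trasp{\BO}}=\norminf{\trasp{\BI}}=1$), and the a~priori estimates $|w_e|\le\wmax$, $|\nu_i|\le\frmax=\phimax/\NR$ supplied by Lemma~\ref{lem:bonded_ev_FH} on the forward-invariant set $\mathcal{V}\times\mathcal{W}$. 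The key quantitative departure from the Hopfield-Hebbian case enters through $\subscr{J}{ns}$: the factor $\diag{\trasp{\BO}\nu}$ is now bounded by $\frmax=\phimax/\NR$ rather than by $\phimax$, which is precisely what produces the extra $1/\NR$ in~\eqref{eq:conditon_cn_cs_fr}. Note also that $u$ enters only through $\Phi'(z)\in[0,1]$, so Assumption~\ref{ass:umax} is not required. Assembling the bounds gives
\[
\JacMTildeFH = \begin{bmatrix} -\NR + \domax\wmax & \domax\phimax/\NR \\ 2\hmax\phimax & -\SR \end{bmatrix},
\]
and Theorem~\ref{thm:composite-norm-majorants} combined with Lemma~\ref{lem:eta} then yields $\lognorm{J(\nu,w)}{\composite}\le\alpha(\JacMTildeFH)$ for any $p\in[1,\infty]$ and a suitably chosen weight $\eta>0$.

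Finally I would verify that $\JacMTildeFH$ is Hurwitz. Being a $2\times 2$ matrix, this reduces to checking $\det(\JacMTildeFH)>0$ and $\tr(\JacMTildeFH)<0$. A direct expansion gives $\det(\JacMTildeFH) = \NR\SR - \domax\hmax\phimax^2(1+2/\NR) - \domax\ubarmax = \gfr$, so positivity of the determinant is exactly~\eqref{eq:conditon_cn_cs_fr}; and as in Theorem~\ref{contractivity_hop_heb} this already implies the trace condition automatically. Applying the quadratic formula for the eigenvalues of a $2\times 2$ matrix (as in the Appendix), after rewriting $\tr(\JacMTildeFH) = -(\constFH + \gfr)/\SR$ with $\constFH = \SR^2 + 2\amax/\NR$, yields the claimed lower bound $\contrateFH$ in~\eqref{spectral_abscissa_fr}, and strong infinitesimal contractivity on $\mathcal{V}\times\mathcal{W}$ follows from Definition~\ref{def:contracting_system}. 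The main obstacle is purely bookkeeping: tracking the $1/\NR$ factor from $\subscr{J}{ns}$ through the determinant and trace so that it ends up correctly packaged into $\gfr$ and $\constFH$ as stated.
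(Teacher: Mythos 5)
Your proposal is correct and follows essentially the same route as the paper: the paper likewise reduces the claim to bounding the four Jacobian blocks of~\eqref{eq:firing_rate_complete_dynamic_system_nodes_edges}, forming the same $2\times 2$ aggregate Metzler majorant $\JacMTildeFH$ (identical to yours, including the $\domax\phimax/\NR$ entry coming from $\norminf{\diag{\trasp{\BO}\nu}}\le\frmax$), and checking $\det(\JacMTildeFH)=\gfr>0$ exactly as you do. The only cosmetic difference is that you place the factor $\diag{\Phi'(z)}$ on the left of $\subscr{J}{nn}$ and $\subscr{J}{ns}$ (which is the correct chain-rule ordering) whereas the paper writes it on the right, but this has no effect on the $\ell_\infty$ bounds.
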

\begin{proof}
The proof follows similar steps as these used to prove Theorem~\ref{contractivity_hop_heb}. The full proof is therefore omitted here, we only notice that, for the firing-rate-Hebbian dynamics, the Jacobian is partitioned into the following matrices:
\begin{align*}
\subscr{J}{nn} &= \BI\diag{w}\trasp{\BO}\left[\Phi'\bigl(\BI\diag{w}\trasp{\BO}\fr+u\bigr)\right]-\NR I_n,\\
\subscr{J}{ns} &= \BI \diag{\trasp{\BO}\fr} \left[\Phi'\bigl(\BI \diag{w} \trasp{\BO} \fr + u\bigr)\right],\\
\subscr{J}{sn} &= \diag{\hh}\bigl(\diag{ \BOF}\trasp{\BI} + \diag{\BIF}\trasp{\BO} \bigr) \diag{\Phi'(\nu)},\\
\subscr{J}{ss} &= - \SR I_m.
\end{align*}
\end{proof}
\begin{extendedArxiv}
Nevertheless, for completeness, we present the Jacobian of~\eqref{eq:firing_rate_complete_dynamic_system_nodes_edges} in Appendix~\ref{appx:jacobian_fr} and the computation of the spectral abscissa in Appendix~\ref{apx:spectral_abscissa_fr}.
\end{extendedArxiv}

With the next result, we give a sufficient condition for the contractivity of the Hopfield-Oja model.
\begin{thm}[Strong infinitesimal contractivity of the Hopfield-Oja model]
Consider the dynamics~\eqref{eq:complete_dynamic_system_nodes_edges_oja} and let~\ref{ass:phimax} --~\ref{ass:ubarmax} hold. Further, assume that:
\beq\label{eq:conditon_cn_cs_oja}
\begin{split}
\NR\SR > \domax\bigl(3\hmax\phimax^2 +\ubarmax\bigr) + 2 \frac{\OR}{\SR}\phimax^2\domax\bigl(\hmax\phimax^2 + \ubarmax\bigr).
\end{split}
\eeq
Then, the dynamics~\eqref{eq:complete_dynamic_system_nodes_edges_oja} is strongly infinitesimally contracting on $\mathcal{X}\times\mathcal{W}$ with respect to the norm $\norm{\left[\norm{x}_\infty,\norm{W}_{\infty}\right]}_{p,[\eta]}$, for any $p\in[1,\infty]$ and where $\eta$ is some positive vector. Moreover, the contraction rate is at least
\[
\contrateHO =
-\frac{\constHO + \go -\sqrt{\left(\constHO + \go\right)^2 - 4\go\SR^2}}{2\SR},
\]
where $\displaystyle \constHO := \SR^2 +2\bmax +2\phi^2\OR/\SR\bigl(\bmax+\domax\ubarmax\bigr)$ and $\displaystyle \go := \NR\SR -3\domax\hmax\phimax^2 +\domax\ubarmax - 2 \frac{\OR}{\SR}\phimax^2 \bigl(\domax\hmax\phimax^2 + \domax\ubarmax\bigr)$.
\end{thm}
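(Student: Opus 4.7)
The plan is to parallel the proof of Theorem~\ref{contractivity_hop_heb}, carefully tracking how the Oja correction $-\OR\diag{\Phi(x)}\diag{\Phi(x)}W$ in the synaptic dynamics modifies the Jacobian. First I would compute the Jacobian $J(x,w)$ of the reduced Hopfield-Oja system~\eqref{eq:complete_dynamic_system_nodes_edges_oja} and partition it into the four blocks $\subscr{J}{nn}$, $\subscr{J}{ns}$, $\subscr{J}{sn}$, $\subscr{J}{ss}$. Because the neural dynamics and the bilinear part of the synaptic dynamics coincide with those of the Hopfield-Hebbian model, the blocks $\subscr{J}{nn}$, $\subscr{J}{ns}$, and the Hebbian contribution to $\subscr{J}{sn}$ are identical to those already handled in Theorem~\ref{contractivity_hop_heb}. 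The Oja term adds (i) an extra summand of the form $-2\OR\diag{\BIH}\diag{w}\trasp{\BI}\diag{\Phi'(x)}$ to $\subscr{J}{sn}$ (arising from the chain rule applied to the quadratic in $\Phi(x)$), and (ii) replaces $\subscr{J}{ss}=-\SR I_m$ by $\subscr{J}{ss}=-\SR I_m - \OR\diag{\BIH}\diag{\BIH}$.

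Using the infinity norm on both $\R^n$ and $\R^m$, the crucial observation -- already exploited in Lemma~\ref{lem:bonded_ev_HO} -- is that the additional diagonal term in $\subscr{J}{ss}$ is negative semidefinite, so the translation property of the log norm yields $\mu_\infty(\subscr{J}{ss})\leq -\SR$ just as in the Hebbian case. On the forward-invariant set $\mathcal{X}\times\mathcal{W}$ from Lemma~\ref{lem:bonded_ev_HO}, using $|w_e|\leq\wmax=(\hmax\phimax^2+\ubarmax)/\SR$ together with assumptions~\ref{ass:phimax}--\ref{ass:ubarmax}, the new contribution to $\subscr{J}{sn}$ admits the uniform infinity-norm bound $2\OR\phimax\wmax$. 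Combining with the Hebbian bound $2\hmax\phimax$ from Theorem~\ref{contractivity_hop_heb} gives $\norminf{\subscr{J}{sn}}\leq 2\hmax\phimax+2\OR\phimax(\hmax\phimax^2+\ubarmax)/\SR$, while all other entries of the aggregate Metzler majorant $\JacM$ remain unchanged.

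These entrywise bounds produce an irreducible $2\times 2$ Metzler matrix $\JacMTildeHO$ that dominates $\JacM$. By Theorem~\ref{thm:composite-norm-majorants} and Lemma~\ref{lem:eta}, for any $p\in[1,\infty]$ there exists a positive vector $\eta$ such that the composite log norm of $J(x,w)$ is bounded above by $\alpha(\JacMTildeHO)$ uniformly on $\mathcal{X}\times\mathcal{W}$. Since $\JacMTildeHO$ is a $2\times 2$ Metzler matrix, it is Hurwitz iff $\det(\JacMTildeHO)>0$ and $\tr(\JacMTildeHO)<0$. A direct computation shows that the determinant inequality reduces precisely to hypothesis~\eqref{eq:conditon_cn_cs_oja}, and that this hypothesis implies the trace inequality. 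Finally, solving the quadratic characteristic polynomial gives the spectral abscissa in closed form, matching the announced rate $\contrateHO$.

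The only real obstacle is algebraic bookkeeping: pinning down the extra block of $\subscr{J}{sn}$ by chain-rule differentiation of the quadratic-in-$\Phi(x)$ Oja term, bounding it uniformly on $\mathcal{X}\times\mathcal{W}$, and verifying that the Hurwitz determinant condition on $\JacMTildeHO$ coincides exactly with~\eqref{eq:conditon_cn_cs_oja}. Once these ingredients are assembled, every remaining step is a direct transcription of the Hopfield-Hebbian argument.
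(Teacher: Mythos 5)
Your proposal is correct and follows essentially the same route as the paper: the paper's own (largely omitted) proof likewise transcribes the Hopfield--Hebbian argument, records the same four Jacobian blocks (including the extra $-2\OR\diag{w}\diag{\BIH}\trasp{\BI}\diag{\Phi'(x)}$ summand in $\subscr{J}{sn}$ and the $-\OR\diag{\BIH}\diag{\BIH}$ correction to $\subscr{J}{ss}$), and its appendix majorant $\JacMTildeHO$ has exactly your entries, with the $(2,1)$ entry $2\phimax\bigl(\hmax+\OR(\hmax\phimax^2+\ubarmax)/\SR\bigr)$ and the $(2,2)$ entry $-\SR$ obtained via the same translation-property observation. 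Your verification that the determinant condition coincides with~\eqref{eq:conditon_cn_cs_oja} and implies the trace condition matches the paper's appendix computation.
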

\begin{proof}
The full proof, which follows similar steps to the ones used to prove  Theorem~\ref{contractivity_hop_heb}, is omitted here for brevity. We note that, for the Hopfield-Oja dynamics, the Jacobian is partitioned into the following matrices: 
\begin{align*}
\subscr{J}{nn} &= - \NR I_n + \BI \diag{w} \trasp{\BO} \diag{\Phi'(x)},\\
\subscr{J}{ns} &= \BI \diag{\trasp{\BO} \Phi(x)},\\
\subscr{J}{sn} &= \diag{\hh}\bigl(\diag{ \trasp{\BO}\Phi(x)}\trasp{\BI} + \diag{\trasp{\BI}\Phi(x)}\trasp{\BO}\bigr)\diag{\Phi'(x)}-2\OR\diag{w}\diag{\BIH}\trasp{\BI} \diag{\Phi'(x)},\\
\subscr{J}{ss} &= - \SR I_m - \OR \diag{\BIH}\diag{\BIH}.
\end{align*}
\end{proof}

\begin{extendedArxiv}
The main differences are the the Jacobian~\eqref{eq:complete_dynamic_system_nodes_edges_oja} and the computation of the spectral abscissa, that, for completeness, we present in  Appendix~\ref{appx:jacobian_ho} and~\ref{apx:spectral_abscissa_hopfield_oja}, respectively.
\end{extendedArxiv}
\begin{extendedArxiv}
Finally, with the next result, we give a sufficient condition for the contractivity of the firing-rate-Oja model.
\begin{thm}[Strong infinitesimal contractivity of the firing-rate-Oja model]
Consider model~\eqref{eq:firing_rate_complete_dynamic_system_nodes_edges_oja} and let~\ref{ass:phimax},~\ref{ass:phiprime} and~\ref{ass:ubarmax} hold.
Further, assume that:
\beq\label{eq:conditon_cn_cs_fr_oja}
\NR\SR > \domax \hmax\phimax^2\Bigl(1 + \frac{2}{\NR}\Bigr) +\domax\ubarmax + 2 \frac{\OR}{\SR\NR}\phimax^2\domax\bigl(\hmax\phimax^2 + \ubarmax\bigr).
\eeq
Then, the dynamics~\eqref{eq:firing_rate_complete_dynamic_system_nodes_edges_oja} is strongly infinitesimally contracting on $\mathcal{V}\times\mathcal{W}$ with respect to the norm $\norm{\left[\norm{\nu}_\infty,\norm{W}_{\infty}\right]}_{p,[\eta]}$, for any $p\in[1,\infty]$ and where $\eta$ is some positive vector. Moreover, the contraction rate is at least
\[
\contrateFO = -\frac{\constFO + \gof-\sqrt{\left(\constFO + \gof\right)^2 - 4\gof\SR^2}}{2\SR},
\]
where $\amax := \domax \hmax \psimax^2$, $\constFO := \SR^2 + 2\amax/\NR +2\phi^2\frac{\OR}{\SR\NR}\bigl(\amax+\domax\ubarmax\bigr)$ and $\gof = \NR\SR -\amax\Bigl(1 + \frac{2}{\NR}\Bigr) -\domax\ubarmax -2\frac{\OR}{\SR\NR}\phimax^2\bigl(\amax + \domax\ubarmax\bigr)$.
\end{thm}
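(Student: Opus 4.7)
The plan is to follow the same template used in Theorem~\ref{contractivity_hop_heb} and the subsequent firing-rate-Hebbian and Hopfield-Oja theorems: compute the Jacobian of~\eqref{eq:firing_rate_complete_dynamic_system_nodes_edges_oja} in $2\times 2$ block form, take $\norminf{\cdot}$ as the local norm on both $\R^n$ and $\R^m$, and use Theorem~\ref{thm:composite-norm-majorants} together with Lemma~\ref{lem:eta} to reduce strong infinitesimal contractivity to the Hurwitz property of a $2\times 2$ aggregate Metzler majorant $\JacMTildeFO$ upper-bounding $\JacM$. By Lemma~\ref{lem:bonded_ev_FO}, trajectories starting in $\mathcal{V}\times\mathcal{W}$ remain there, so all state-dependent quantities are uniformly bounded via~\ref{ass:phimax},~\ref{ass:phiprime}, and~\ref{ass:ubarmax}; assumption~\ref{ass:umax} is not needed since $u$ enters only through the saturated activation $\Phi$.

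The first step is to write down the Jacobian blocks, combining the firing-rate neural structure (as in the firing-rate-Hebbian proof) with the Oja correction to the synaptic dynamics (as in the Hopfield-Oja proof):
\begin{align*}
\subscr{J}{nn} &= -\NR I_n + \diag{\Phi'\bigl(\BI\diag{w}\trasp{\BO}\fr+u\bigr)}\BI\diag{w}\trasp{\BO},\\
\subscr{J}{ns} &= \diag{\Phi'\bigl(\BI\diag{w}\trasp{\BO}\fr+u\bigr)}\BI\diag{\trasp{\BO}\fr},\\
\subscr{J}{sn} &= \diag{\hh}\bigl(\diag{\BOF}\trasp{\BI}+\diag{\BIF}\trasp{\BO}\bigr)\diag{\Phi'(\fr)} -2\OR\diag{w}\diag{\BIF}\trasp{\BI}\diag{\Phi'(\fr)},\\
\subscr{J}{ss} &= -\SR I_m -\OR\diag{\BIF}\diag{\BIF}.
\end{align*}
Using $\norminf{\BI}=\domax$, $\norminf{\trasp{\BI}}=\norminf{\trasp{\BO}}=1$, $\phi'\leq 1$, $|w_e|\leq\wmax$, $|\fr_i|\leq\frmax=\phimax/\NR$, the translation property of the log norm applied to the diagonal pieces of $\subscr{J}{nn}$ and $\subscr{J}{ss}$, and the non-positivity of $-\OR\diag{\BIF}\diag{\BIF}$, one obtains entrywise bounds on $\JacM$ that assemble into an irreducible Metzler matrix $\JacMTildeFO\in\R^{2\times 2}$ with diagonal entries dominated by $-\NR+\domax\wmax$ and $-\SR$, and off-diagonal entries dominated by $\domax\frmax$ and $2\phimax(\hmax+\OR\wmax)$.

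Finally, Lemma~\ref{lem:eta} provides a positive weight vector $\eta$ with $\lognorm{\JacMTildeFO}{p,[\eta]}=\alpha(\JacMTildeFO)$, and Theorem~\ref{thm:composite-norm-majorants} transfers any negative upper bound on this spectral abscissa to $\lognorm{J(\fr,w)}{\composite}$. Since $\JacMTildeFO$ is $2\times 2$ and Metzler, its Hurwitz property reduces to $\det(\JacMTildeFO)>0$ (the trace inequality is automatic); substituting $\wmax=(\hmax\phimax^2+\ubarmax)/\SR$ and $\frmax=\phimax/\NR$, a direct expansion recovers exactly~\eqref{eq:conditon_cn_cs_fr_oja}. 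The contraction rate $\contrateFO$ then follows from the closed-form spectral abscissa of a $2\times 2$ Metzler matrix, regrouped into $\constFO$ and $\gof$ as in the Hopfield-Oja case. I expect the only delicate bookkeeping to lie in the bound on $\norminf{\subscr{J}{sn}}$: the Hebbian and Oja pieces each have at most one non-zero entry per row (by the sparsity of $\trasp{\BI}$ and $\trasp{\BO}$), so they combine additively to $2\phimax(\hmax+\OR\wmax)$, and one must track that the Oja contribution carries $\OR\wmax$ rather than the $\OR\phimax^2$ factor already appearing in $\subscr{J}{ss}$.
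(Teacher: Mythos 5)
Your proposal is correct and follows essentially the same route as the paper: the same block Jacobian, the same $\infty$-norm local norms, the same $2\times2$ aggregate Metzler majorant $\JacMTildeFO$ with entries $-\NR+\domax\wmax$, $\domax\frmax$, $2\phimax(\hmax+\OR\wmax)$, $-\SR$, and the same reduction of the Hurwitz test to the determinant condition, which expands to~\eqref{eq:conditon_cn_cs_fr_oja}. Your Jacobian blocks are in fact written with the arguments $\Phi(\nu)$ and the chain-rule ordering done consistently (the paper's appendix carries a harmless $\Phi(x)$-versus-$\Phi(\nu)$ typo), and your observation that the Oja term contributes $\OR\wmax$ to the $\subscr{J}{sn}$ bound while $\subscr{J}{ss}$ absorbs the $\OR\phimax^2$ term via the translation property matches the paper's computation exactly.
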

Again we omit the proof since it follows the same steps of Theorem~\ref{contractivity_hop_heb}. The main differences are the the Jacobian of the system~\eqref{eq:firing_rate_complete_dynamic_system_nodes_edges_oja} and the computation of the spectral abscissa, that, for completeness, we present in  Appendix~\ref{appx:jacobian_fro} and~\ref{apx:spectral_abscissa_firingrate_oja}, respectively.
\end{extendedArxiv}

Finally, we close this section with the following observation. Comparing~\eqref{eq:conditon_cn_cs},~\eqref{eq:conditon_cn_cs_fr},~\eqref{eq:conditon_cn_cs_oja} and~\eqref{eq:conditon_cn_cs_fr_oja}, we can see that the contractivity test for~\eqref{eq:complete_dynamic_system_nodes_edges_oja} and~\eqref{eq:firing_rate_complete_dynamic_system_nodes_edges_oja} are more conservative than the test for~\eqref{eq:complete_dynamic_system_nodes_edges} and~\eqref{eq:firing_rate_complete_dynamic_system_nodes_edges}.
On the other hand, when $\NR = 1$ the contractivity test for the Hopfield-Hebbian and the firing-rate-Hebbian models is the same, while when $c_n >1$~\eqref{eq:conditon_cn_cs_fr} gives sharper contractivity condition with respect to~\eqref{eq:conditon_cn_cs}. Vice versa when $c_n <1$.
\begin{rem}
Sparse connectivity enables the low-dimensional reformulations of systems~\eqref{eq:complete_dynamic_system} -- \eqref{eq:complete_dynamic_system_fr_oja} as systems~\eqref{eq:complete_dynamic_system_nodes_edges} -- \eqref{eq:firing_rate_complete_dynamic_system_nodes_edges_oja}. Conditions~\eqref{eq:conditon_cn_cs},~\eqref{eq:conditon_cn_cs_fr},~\eqref{eq:conditon_cn_cs_oja} and~\eqref{eq:conditon_cn_cs_fr_oja}, are less conservative compared to what one could obtain if the analysis was applied directly to the original dynamics~\eqref{eq:complete_dynamic_system} -- \eqref{eq:complete_dynamic_system_fr_oja}.
\end{rem}
\begin{rem}
Throughout the paper, as in e.g.,~\cite{MG-OF-PB:12, WG-WK:02, LK-ME-JJES:22, KDM-FF:12}, we consider homogeneous decay rates. However, it is worth noting that our analysis can be generalized to heterogeneous decay rates. For example, let $c_n^i$ be the decay rate for the $i$-th neuron so that the dynamics~\eqref{eq:hopfield_dynamic} reads
$$
\dot x_i = - \NR^i x_i + \sum_{j=1}^n W_{ij}\phi(x_j) + u_i.
$$
Then the contractivity condition~\eqref{eq:conditon_cn_cs} becomes
$$
\big(\min_{i}\NR^i\big)\SR > 3\domax \hmax\phimax^2 + \domax\ubarmax.
$$
\end{rem}

\subsection{Invariance Results for the Synaptic Dynamics}\label{Invariance results for the synaptic dynamics}
Biological neurons release either excitatory (E) or inhibitory (I) outgoing synapses, not both~\cite{HD:35, JCE-PF-KK:54}.
This property, known as \emph{Dale’s Principle}, implies that neurons cannot have a mixture of positive and negative output synapses, and furthermore, the inhibitory/excitatory nature of the synapses cannot change over time. This means that the elements of the columns of the synaptic matrix $W$ are either all non-negative or non-positive $\forall t$.
We now investigate if the models considered in this paper satisfy Dale's Principle.

\begin{lem}[Dale's Principle]\label{lem:invariance_results_for_the_synaptic_dynamics}
Consider the models~\eqref{eq:complete_dynamic_system} -- \eqref{eq:complete_dynamic_system_fr_oja} with external synaptic stimuli $\bar U = 0$ and let~\ref{ass:phimax}--\ref{ass:umax} hold. Pick a neuron $j \in \until{n}$. If, for all $i \in \until{n}$:
\begin{enumerate}
\item \label{stm:dale>0}
$H_{ij} > 0$ and $W_{ij}(0) \geq 0$, then $W_{ij}(t) \geq 0$, $\forall t \geq0$;
\item \label{stm:dale<0}
$H_{ij} < 0$ and $W_{ij}(0) \leq 0$, then $W_{ij}(t) \leq 0$, $\forall t \geq0$.
\end{enumerate}
\end{lem}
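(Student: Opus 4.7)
The plan is to prove the statement via direct integration of the scalar ODE governing $W_{ij}(t)$ for a fixed pair $(i,j)$, exploiting the fact that along any fixed solution trajectory of the coupled system, the dynamics of $W_{ij}$ reduces to a scalar linear nonhomogeneous equation in $W_{ij}$ with a continuous time-varying forcing term whose sign is controlled by $H_{ij}$.

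First, I would fix a solution of the coupled system and consider $y_i(t), y_j(t)$ (either the membrane potentials or firing-rates, depending on the model) as given continuous functions of time. Then for the HL rule in~\eqref{eq:complete_dynamic_system} and~\eqref{eq:firing_rate_complete_dynamic_system}, with $\bar U = 0$, the scalar equation for the $(i,j)$-th synapse is
\[
\dot W_{ij}(t) = -\SR W_{ij}(t) + H_{ij}\,\phi(y_i(t))\phi(y_j(t)),
\]
and for the Oja-like rule in~\eqref{eq:complete_dynamic_system_oja} and~\eqref{eq:complete_dynamic_system_fr_oja}, with $\bar U = 0$,
\[
\dot W_{ij}(t) = -\bigl(\SR + \OR \phi^2(y_i(t))\bigr) W_{ij}(t) + H_{ij}\,\phi(y_i(t))\phi(y_j(t)).
\]
In either case this is a scalar linear ODE in $W_{ij}$, so I can apply the variation of constants formula to obtain an explicit expression of the form
\[
W_{ij}(t) = W_{ij}(0)\,\e^{-\int_0^t a(s)\,ds} + H_{ij}\int_0^t \e^{-\int_s^t a(r)\,dr}\phi(y_i(s))\phi(y_j(s))\,ds,
\]
where $a(s) = \SR$ in the HL case and $a(s) = \SR + \OR \phi^2(y_i(s))$ in the Oja case. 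Note that $a(s) > 0$ in both cases, so the exponential factors are positive.

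The conclusion then follows immediately from assumption~\ref{ass:phimax}: since $\phi \geq 0$, the integrand $\phi(y_i(s))\phi(y_j(s))$ is non-negative and the exponentials are positive. In case~\ref{stm:dale>0} both summands are non-negative (the first because $W_{ij}(0) \geq 0$, the second because $H_{ij} > 0$), so $W_{ij}(t) \geq 0$ for all $t \geq 0$. In case~\ref{stm:dale<0} both summands are non-positive by the analogous argument, so $W_{ij}(t) \leq 0$ for all $t \geq 0$. Repeating this argument for each $i \in \until{n}$ at the fixed $j$ yields the claim.

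The proof is essentially routine once one recognizes that $W_{ij}$ obeys a scalar affine ODE along any trajectory of the coupled system; the only mild subtlety is to check that existence and continuity of $y_i(\cdot), y_j(\cdot)$ on $[0,\infty)$ is guaranteed, which is ensured by the forward-invariance results already established in Section~\ref{bouded evolutions} (Lemma~\ref{lem:bonded_ev_HH} and its analogues), so that the integrals above are well-defined. Alternatively, I could phrase the argument as a Nagumo-type tangential condition: on the boundary $\{W_{ij}=0\}$, the right-hand side equals $H_{ij}\phi(y_i)\phi(y_j)$, which has the required sign, yielding forward invariance of the appropriate closed half-line; but the explicit integration route is more direct and gives the strongest statement.
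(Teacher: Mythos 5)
Your proposal is correct. The structural reduction is the same as the paper's: for each fixed pair $(i,j)$ you freeze the neural variables $y_i(\cdot), y_j(\cdot)$ as exogenous continuous signals, so that $W_{ij}$ obeys a scalar affine ODE, and the Oja case subsumes the Hebbian case by setting $\OR=0$. Where you differ is in the concluding device: the paper checks the sign of the right-hand side on the boundary $\{W_{ij}=0\}$ and invokes Nagumo's theorem to get forward invariance of the closed half-line, whereas you integrate explicitly by variation of constants and read off the sign of $W_{ij}(t)$ from the positivity of the exponential factors, the sign of $W_{ij}(0)$, and the sign of $H_{ij}\phi(y_i)\phi(y_j)$ guaranteed by~\ref{ass:phimax}. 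Your route is more elementary (no invariance theorem needed, only well-posedness of the solution, which the paper assumes and which your appeal to the boundedness lemmas reinforces) and yields a quantitative representation of $W_{ij}(t)$ as a bonus; the paper's Nagumo argument is shorter once the tangency condition is observed and generalizes more readily to invariance claims that do not reduce to scalar linear equations. You even note the Nagumo alternative yourself, so the two proofs are fully interchangeable here.
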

\begin{proof}
We start by proving part~\ref{stm:dale>0} for the dynamics in~\eqref{eq:complete_dynamic_system_oja} with $\bar U_{ij} = 0$. We show the result by considering the synaptic dynamics
\begin{align}
\label{eq:synaptic_dynamics_no-input}
\dot W_{ij}(t) &= \HH_{ij}\phi(y_i(t))\phi(y_j(t)) - (\SR + \OR \phi^2(y_i(t))) W_{ij}(t),
\end{align}
with $y_i(t)$ and $y_j(t)$ being exogenous inputs. Let $W_j := (W_{ij})_{i \in \until{n}} \in \R^n$ be the $j$-th column of the matrix $W$. We show that, if the assumptions in (i) are satisfied, then the positive orthant is forward invariant for the dynamics for $W_j$ uniformly in $y_i(t)$ and $y_j(t)$.

To this aim, note that by assumptions when $W_{ij}=0$ the right hand side in~\eqref{eq:synaptic_dynamics_no-input} is non-negative. Hence, by Nagumo's Theorem~\cite{MN:1942}, the positive orthant is forward invariant for~\eqref{eq:synaptic_dynamics_no-input}. Moreover, since this property holds for all signals $y_i(t)$ and $y_j(t)$, this property also holds when $y_i(t) = x_i(t)$ and $y_j(t) = x_j(t)$. This gives the result for~\eqref{eq:complete_dynamic_system_oja} and \eqref{eq:complete_dynamic_system_fr_oja}. Furthermore, the non-negativity condition for the right hand side in~\eqref{eq:synaptic_dynamics_no-input} also holds when $\OR = 0$ and this in turn yields the result for models~\eqref{eq:complete_dynamic_system} and~\eqref{eq:firing_rate_complete_dynamic_system}.
The proof for part~\ref{stm:dale<0} follows similar reasoning and is omitted here for brevity.
\end{proof}

\begin{rem}
A key assumption in Lemma~\ref{lem:invariance_results_for_the_synaptic_dynamics} is that the models satisfy~\ref{ass:phimax}, ensuring the activation function's non-negativity. 
It is worth noting that if the activation function acting on the pre-synaptic node $y_j$ has an opposite sign with respect to the one acting on the post-synaptic node $y_i$, then not only Dale's principle is not satisfied, but neurons over time will change the outgoing synapses they release.
That is, excitatory synapses become inhibitory and viceversa.
\end{rem}

Finally, we investigate invariant results for symmetric synaptic matrices. We analyze this aspect as in the neuroscience literature this appears to be a key property for a number of well known models, e.g.,~\cite{DWD-JJH:92, JJH:84, LK-ML-JJES-EKM:20, BS-YB:17}. Nevertheless, the assumption of symmetric weight matrices, which is often made to streamline the mathematical analysis, violates Dale’s Principle. To investigate invariant results for symmetric synaptic matrices, we consider the dynamics~\eqref{eq:hebbian_dynamic} in vector form with $\bar U = 0$:
\beq \label{eq:synaptic_dynamics_no-input_vector_form}
\dot W(t) = \HH \circ \Phi(y(t))\trasp{\Phi(y(t))} - \SR W(t),
\eeq
where $y(t)$ is an exogenous input.
The following Lemma formalizes the fact that, if $H$ is symmetric the system always converges to a symmetric synaptic matrix. Moreover, if $W(0)$ is symmetric, then $W(t)=\trasp{W}(t)$, $\forall t \ge 0$.
\begin{lem}\label{lem:symmetric_matrix}
Consider the models~\eqref{eq:complete_dynamic_system} and~\eqref{eq:firing_rate_complete_dynamic_system} with external synaptic stimuli $\bar U = 0$ and let~\ref{ass:phimax}--\ref{ass:umax} hold. Assume that $\HH$ is symmetric and let $\subscr{W}{S}(t)$ and $\subscr{W}{A}(t)$ be the symmetric and skew-symmetric components of $W(t)$, then:
\begin{enumerate}
\item \label{stm:symmetric_1}
if $\subscr{W}{A}(0) = 0$, then $\subscr{W}{A}(t) = 0 $, $\forall t\geq 0;$
\item \label{stm:symmetric_2}
$\displaystyle \lim_{t \to \infty}  \subscr{W}{A}(t) = 0.$
\end{enumerate}
\end{lem}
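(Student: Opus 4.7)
The plan is to decompose $W$ into its symmetric and skew-symmetric components and observe that, under the stated assumptions, the skew-symmetric part evolves according to a simple scalar linear ODE decoupled from the neural variables.

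First, I would write $W(t) = \subscr{W}{S}(t) + \subscr{W}{A}(t)$ with
\[
\subscr{W}{S}(t) = \tfrac{1}{2}\bigl(W(t) + \trasp{W}(t)\bigr), \qquad \subscr{W}{A}(t) = \tfrac{1}{2}\bigl(W(t) - \trasp{W}(t)\bigr),
\]
and rewrite the synaptic evolution common to models~\eqref{eq:complete_dynamic_system} and~\eqref{eq:firing_rate_complete_dynamic_system} (with $\bar U = 0$) in the vector form~\eqref{eq:synaptic_dynamics_no-input_vector_form}. The crucial observation is that the driving term $\HH \circ \Phi(y)\trasp{\Phi(y)}$ is symmetric: indeed $\Phi(y)\trasp{\Phi(y)}$ is symmetric by construction, and the Hadamard product of two symmetric matrices is symmetric, which uses the assumption $\HH = \trasp{\HH}$.

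Next, I would take the transpose of~\eqref{eq:synaptic_dynamics_no-input_vector_form} and subtract it from the original equation. Using symmetry of the first term, the driving term cancels and what remains is the linear decoupled dynamics
\[
\dot{\subscr{W}{A}}(t) = - \SR \, \subscr{W}{A}(t), \qquad t \geq 0.
\]
This is an autonomous linear matrix ODE whose solution is $\subscr{W}{A}(t) = \e^{-\SR t} \subscr{W}{A}(0)$. Statement~\ref{stm:symmetric_1} is then immediate since $\subscr{W}{A}(0) = 0$ gives $\subscr{W}{A}(t) = 0$ for all $t \geq 0$, and statement~\ref{stm:symmetric_2} follows because $\SR > 0$ implies $\e^{-\SR t} \subscr{W}{A}(0) \to 0$ as $t \to \infty$ for every initial condition.

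There is really no substantial obstacle here; the only delicate point to state cleanly is the symmetry of the Hadamard product $\HH \circ \Phi(y)\trasp{\Phi(y)}$, which is precisely why the hypothesis $\HH = \trasp{\HH}$ suffices to decouple the skew-symmetric part. Note that this argument goes through identically for both~\eqref{eq:complete_dynamic_system} and~\eqref{eq:firing_rate_complete_dynamic_system}, since in both models the synaptic equation is identical and does not depend on which variable ($x$ or $\fr$) plays the role of $y$; the bounds on $y$ provided by Lemmas~\ref{lem:bonded_ev_HH} and~\ref{lem:bonded_ev_FH} ensure the driving term remains well defined but are not needed for the decoupling argument itself.
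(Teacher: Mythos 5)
Your proposal is correct and follows essentially the same route as the paper: decompose $W$ into symmetric and skew-symmetric parts, use the symmetry of $\HH \circ \Phi(y)\trasp{\Phi(y)}$ (from $\HH = \trasp{\HH}$) to obtain the decoupled linear dynamics $\dot{\subscr{W}{A}} = -\SR\,\subscr{W}{A}$, and read off both statements from the explicit solution $\subscr{W}{A}(t) = \e^{-\SR t}\subscr{W}{A}(0)$. If anything, your version is slightly tidier, since deriving the skew-symmetric ODE first handles part (i) and part (ii) uniformly, whereas the paper argues part (i) somewhat more informally before writing down the same ODE for part (ii).
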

\begin{proof}
The proof is inspired by~\cite[Appendix 8.1]{MG-OF-PB:12}. First, we write $W(t) = \subscr{W}{S}(t) + \subscr{W}{A}(t)$, $\forall t \geq 0$, so that equation~\eqref{eq:synaptic_dynamics_no-input_vector_form} can be written as
$$
\subscr{\dot W}{S} + \subscr{\dot W}{A} = (\HH \circ \Phi(y)\trasp{\Phi(y)} - \SR \subscr{W}{S}) - \SR\subscr{W}{A}.
$$
To prove part~\ref{stm:symmetric_1} note that $\subscr{W}{A}(0) = 0$ implies that the right end side in~\eqref{eq:synaptic_dynamics_no-input_vector_form} is symmetric at time $t=0$, thus $\subscr{W}{A}(t) = 0 $, $\forall t\geq 0$ and uniformly in $y(t)$. This leads to the desired result. Next, for part~\ref{stm:symmetric_2} it suffices to note that the dynamics for the skew-symmetric component of $W(t)$ are given by $\subscr{\dot W}{A}(t) = - \SR \subscr{W}{A}(t)$, whose solution is $\subscr{W}{A}(t) = \subscr{W}{A}(0)\e^{- \SR t}$, $\forall t {\geq 0}$. The
desired result then follow.
\end{proof}
\begin{rem}
Remarkably the previous results hold only when $\OR = 0$. In fact, if $\OR \neq 0$, in general the right end side of~\eqref{eq:synaptic_dynamics_no-input} is not symmetric and therefore Lemma~\ref{lem:symmetric_matrix} can't apply.
\end{rem}

\section{Numerical Example}\label{Numerical Example}
We validate our theoretical results via a simple example and, for brevity, we present numerical results only for the Hopfield-Hebbian model~\eqref{eq:complete_dynamic_system_nodes_edges}.
Inspired by one of the building blocks of the nematode C.~elegans neural circuit studied in~\cite{NM-AMZ-TE:22}, we consider the simple network of Figure~\ref{fig:num_example} with six neurons and six edges (four excitatory and two inhibitory). The C.~elegans architecture, in fact, can be schematically represented as a cascade of the blocks of the network in Figure~\ref{fig:num_example}.
\begin{figure}[!ht]
\centerline{\includegraphics[scale=.45]{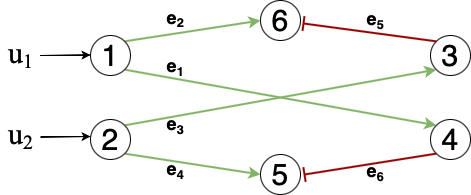}}
\caption{Coupled neural-synaptic model with six neurons $i$ and six edges, $e_i$, $i=1,\dots,6$, four excitatory (green) and two inhibitory (red). Only nodes $1$ and $2$ are subjected to the external stimuli $u_1$ and $u_2$, respectively. Colors online.}
\label{fig:num_example}
\end{figure}

\noindent
The out- and in-incidence matrices for this network are:
\[
\BI = 
\begin{bmatrix}
0 & 0 & 0 & 0 & 0 & 0\\
0 & 0 & 0 & 0 & 0 & 0\\
0 & 0 & 1 & 0 & 0 & 0\\
1 & 0 & 0 & 0 & 0 & 0\\
0 & 0 & 0 & 1 & 0 & 1\\
0 & 1 & 0 & 0 & 1 & 0
\end{bmatrix},\ 
\BO =
\begin{bmatrix}
1 & 1 & 0 & 0 & 0 & 0\\
0 & 0 & 1 & 1 & 0 & 0\\
0 & 0 & 0 & 0 & 1 & 0\\
0 & 0 & 0 & 0 & 0 & 1\\
0 & 0 & 0 & 0 & 0 & 0\\
0 & 0 & 0 & 0 & 0 & 0
\end{bmatrix}
\]
In this case $\domax = 2$ and we pick {the elements of $h$ in~\eqref{eq:complete_dynamic_system_nodes_edges} from the interval $[-1,1]$}. These elements are selected so that $e_i$, $i \in \until{4}$, are excitatory, while $e_5$ and $e_6$ are inhibitory.
For the neurons we set $\phi(x) = \frac{1}{1 + \e^{-x}}$ and hence $\phimax = 1$.
In the network, only neurons $1$ and $2$ receive the external stimuli $u_1 = 20 \sin(8t)$ and $u_2 = 15 \cos(8t)$, respectively; also, the excitatory synaptic weights are subject to a constant stimulus $\bar u = 1.5$. In our experiments, we pick the initial conditions from the set $[-1, 1]$ and select the synaptic initial conditions so that Dale's Principle is satisfied.
In order to numerically validate {the results presented in Section~\ref{Dynamical properties of the model}} we set $c_n = 3.6$ and $c_s = 3.2$, so that condition~\eqref{eq:conditon_cn_cs} is satisfied, i.e., the Hopfield-Hebbian network is strongly infinitesimally contracting. The behavior of the network is illustrated in Figure~\ref{fig:contracting_dynamics}.
The contraction rate estimate given by~\eqref{spectral_abscissa} is $\contrateHH = 0.54$. As expected, this estimate is more conservative than the empirical contraction rate of $4.10$ obtained from numerical simulations.
\begin{figure}[!h]
    \centering
    \includegraphics[scale = 0.68]{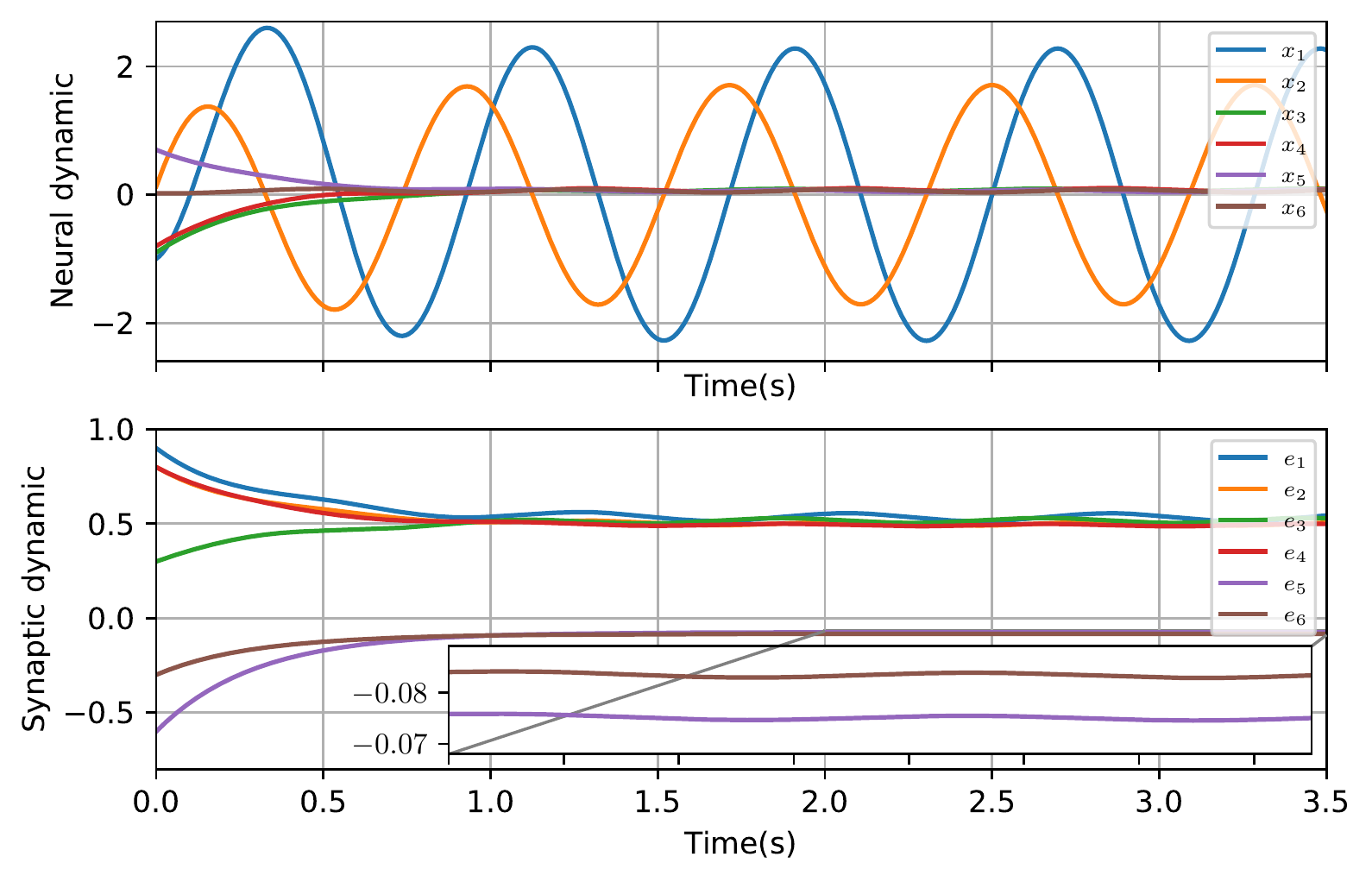}
    \caption{Simulation of the Hopfield-Hebbian model of Figure~\ref{fig:num_example} exhibiting entrainment to periodic inputs   typical of contracting systems. See Section~\ref{Numerical Example} for the parameters.}
    \label{fig:contracting_dynamics}
\end{figure}
Moreover, a direct computation shows that $\xmax = 5.98$ and $\wmax = 0.78$, in accordance with Lemma~\ref{lem:bonded_ev_HH}. Also, the behavior in the figure is in accordance with Lemma~\ref{lem:invariance_results_for_the_synaptic_dynamics}: note indeed that the synaptic weights have always the same sign. We also note that, since our conditions guarantee contractivity of the network, the Hopfield-Hebbian network becomes entrained by the periodic inputs $u_1$ and $u_2$ (see Figure~\ref{fig:contracting_dynamics}).
\begin{figure}[!ht]
\centerline{\includegraphics[scale=.45]{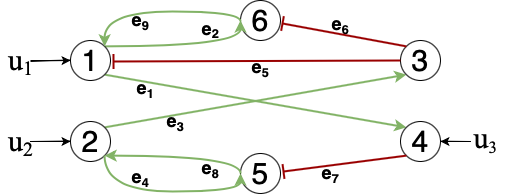}}
\caption{Coupled neural-synaptic model with six neurons $i$, $i=1,\dots,6$, and nine edges, $e_j$, $j=1,\dots,9$, six excitatory (green) and three inhibitory (red). Only nodes $1$, $2$, and $4$ are subjected to the external stimuli $u_1$, $u_2$, and $u_3$, respectively. Colors online.}
\label{fig:recurrent_network}
\end{figure}

Finally, to validate our results for RNNs, we introduce recurrent connections to the network in Figure~\ref{fig:num_example}, obtaining the network in Figure~\ref{fig:recurrent_network}. Here, $\domax = 2$ and we pick {the elements of $h$ in~\eqref{eq:complete_dynamic_system_nodes_edges} from the interval $[-1,1]$}. These elements are selected so that $e_5$, $e_6$, and $e_7$ are inhibitory, while the other edges are excitatory.
In the network, only neurons $1$, $2$, and $4$ receive the external stimuli $u_1 = 5\tanh{(t)}$, $u_2 = 3\tanh{(t)}$, and $u_3 = 7\tanh{(t)}$, respectively; also, the synaptic weights $e_1$ and $e_4$ are subject to a constant stimulus $\bar u = 1.5$, while $e_2$, $e_3$ and $e_9$ to $\bar u = 1$. In our experiments, we pick the initial conditions from the set $[-1, 1]$ and select the synaptic initial conditions so that Dale's Principle is satisfied. Also in this case, we set $c_n = 3.6$ and $c_s = 3.2$, so that condition~\eqref{eq:conditon_cn_cs} is satisfied.
For this example, we perform an exploratory numerical study to investigate what happens when the network parameters are set so to satisfy {Theorem~\ref{contractivity_hop_heb}}, but the activation functions are affected by the delay, say $\tau = 2s$ in our simulations. While it is well known that contraction is preserved through specific time-delayed communications~\cite{WW-JJES:06b}, {to the best of our knowledge this property has not been investigated for the types of dynamics considered here.} The resulting behavior of the network, illustrated in Figure~\ref{fig:contracting_delay}, shows that the delayed system appears to be still contracting. We leave the study of neural-synaptic networks with delays to future work.
\begin{figure}[!h]
   \centering
    \includegraphics[scale = 0.68]{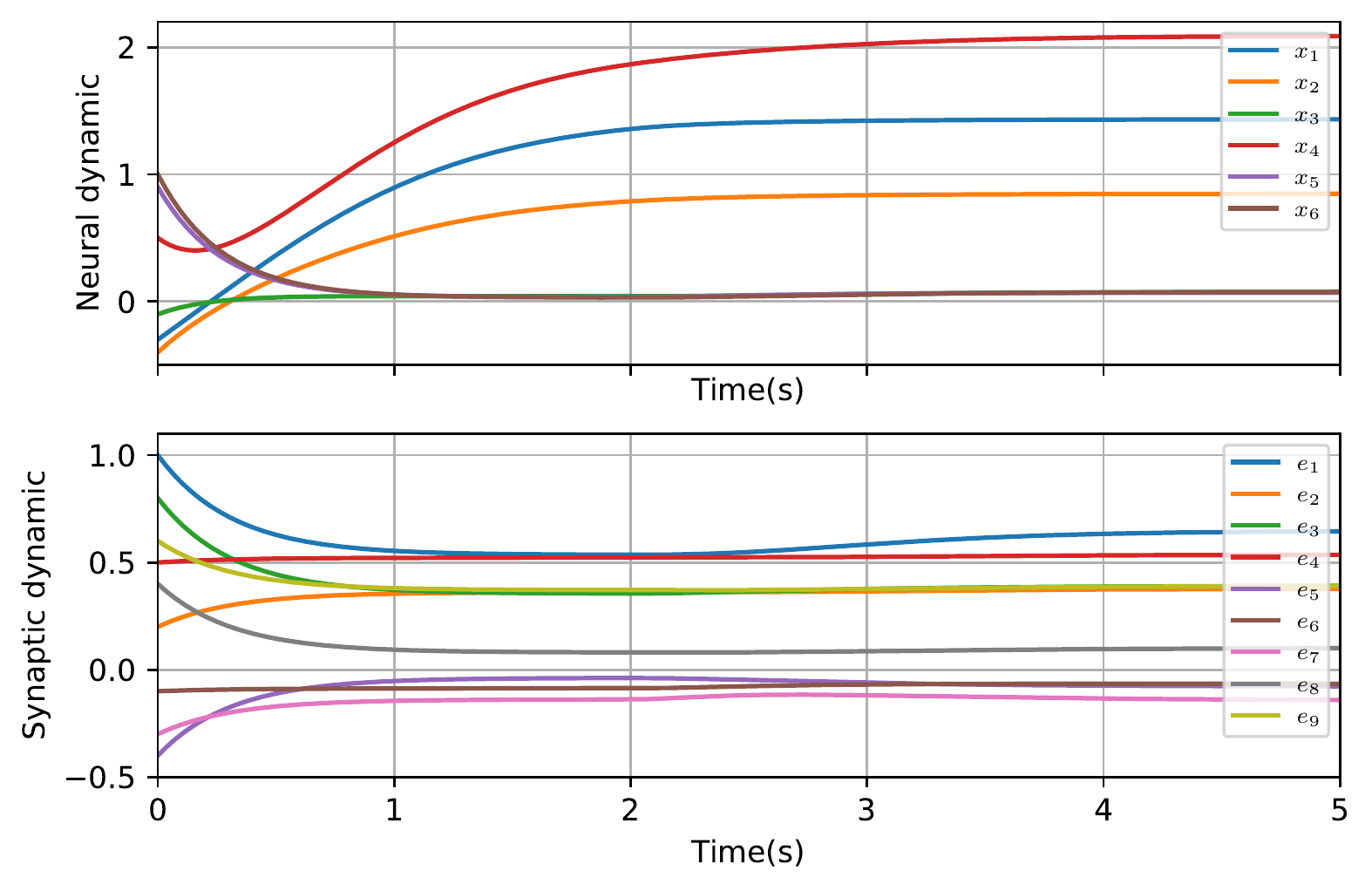}
    \caption{Simulation of the Hopfield-Hebbian model of Figure~\ref{fig:recurrent_network} with the activation functions affected by $2s$ of delay. Even with the delays the system appears to be still contracting. See Section~\ref{Numerical Example} for the parameters.}
    \label{fig:contracting_delay}
\end{figure}

\section{Conclusion and Future Work} \label{Conclusion}
We presented the modeling and analysis of four coupled neural-synaptic models: the Hopfield-Hebbian model, the firing-rate-Hebbian model, the Hopfield-Oja model, and the firing-rate-Oja model. We considered networks with both excitatory and inhibitory synapses governed by both Hebbian and anti-Hebbian rules.
In~\cite{KDM-FF:12} it is shown that when the synaptic dynamics are constant under proper transformations the models~\eqref{eq:hopfield_dynamic} and~\eqref{eq:firing_rate_dynamic} are mathematically equivalent. In general, this is not true when the matrix $W$ is time dependent, which is a key assumption of our models. Therefore we analyzed both neural models~\eqref{eq:hopfield_dynamic} and~\eqref{eq:firing_rate_dynamic}.
To capture the synaptic sparsity of neural circuits, for each model we proposed a low dimensional modeling formulation that allowed us to go from a system with $n\times n^2$ variables--$n$ neurons and $n^2$ synaptic connections--to a system with $n\times m$ variables, where $m\ll n^2$ is the number of non zero elements of the synaptic connection matrix $H$.
We then characterized the key dynamical properties of the models. First, we gave a biologically-inspired forward invariance result for the trajectories of the system.
Then, as key result, we gave sufficient conditions for the non-Euclidean contractivity of the models. Each contractivity test we presented is based upon biologically meaningful quantities, i.e., neural and synaptic decay rate, maximum in-degree, and the maximum synaptic strength. Particularly, we found that when the neural decay rate $\NR >1$ the model with the FRNN has sharper contractivity conditions with respect to the one with the HNN. Finally, we showed that under suitable conditions the synaptic rules satisfy Dale's Principle and illustrated the effectiveness of our results via a numerical example.
Motivated by the numerical findings reported in Figure~\ref{fig:contracting_delay} in our future work we plan to consider models with delays. We will also explore the functional implications of the HL rules considered here.
A possible future research direction could be the extension of our analysis to bounded confidence models in opinion dynamics~\cite{RH-UK:02}, given their analogies with the time-dependent neural dynamics investigated in this paper. Finally, it would be interesting to explore how to use the contractivity results obtained in this paper to provide robustness guarantees of neural networks in machine learning.

\begin{extendedArxiv}
\appendix
\subsection{Computation of the Jacobian}
For completeness, we present here the Jacobian of the firing-rate-Hebbian model, the Hopfield-Oja model, and the firing-rate-Oja model (the one of the Hopfield-Hebbian model can be found in the proof of Theorem~\ref{contractivity_hop_heb}).

To this purpose for every model, we consider the low dimensional formulation in $n\times m$ variables. We define the vector $f = [\subscr{f}{n}(y,w),\ \subscr{f}{s}(y,w)]^{\textsf{T}} \in \R^{n+m}$, where $\subscr{f}{n}(y,w)$ is the function describing the neural dynamic, $\subscr{f}{s}(y,w)$ is the function describing the synaptic dynamics, $y$ is the membrane potential in the case of the Hopfield network or the firing-rate in the so-called model, and $w$ is the synaptic vector.
The Jacobian of coupled neural-synaptic  system is:
\[
J(y,w) = 
\left[
\begin{array}{c|c}
\displaystyle \derp{\subscr{f}{n}}{y} & \displaystyle \derp{\subscr{f}{n}}{w}\vspace{0.3 mm}\\
\hline 
\displaystyle \derp{\subscr{f}{s}}{y} & \displaystyle \derp{\subscr{f}{s}}{w}
\end{array}
\right]
:= 
\left[
\begin{array}{c|c}
\subscr{J}{nn} & \subscr{J}{ns} \\
\hline
\subscr{J}{sn} & \subscr{J}{ss}
\end{array}
\right].
\]
\subsubsection{Firing-rate-Hebbian Model}\label{appx:jacobian_fr}
In this case we define $\subscr{f}{n} := - \NR \fr + \Phi\bigl(\BI \diag{w} \trasp{\BO} \fr + u\bigr)$ and $\subscr{f}{s} :=\hh \circ \bigl( \BOF\bigr)\circ \left(\BIF\right) - \SR w  + \bar u$. We have:
\begin{align*}
\subscr{J}{nn} &= - \NR I_n+ \BI \diag{w} \trasp{\BO} \left[\Phi'\bigl(\BI \diag{w} \trasp{\BO} \fr + u\bigr)\right],\\
\subscr{J}{ns} &= \BI \diag{\trasp{\BO}\fr} \left[\Phi'\bigl(\BI \diag{w} \trasp{\BO} \fr + u\bigr)\right],\\
\subscr{J}{sn} &= \diag{\hh}\bigl(\diag{ \BOF}\trasp{\BI} + \diag{\BIF}\trasp{\BO} \bigr) \diag{\Phi'(\nu)},\\
\subscr{J}{ss} &= - \SR I_m.
\end{align*}

\subsubsection{Hopfield-Oja Model}\label{appx:jacobian_ho}
It is $ \subscr{f}{n} := - \NR x + \BI \diag{w} \trasp{\BO} \Phi(x) + u$ and $ \subscr{f}{s} := \hh \circ \BOH\circ \BIH - \bigl(\SR I_m + \OR \diag{\BIH\circ\BIH} \bigr)w + \bar u$. We have:
\begin{align*}
\subscr{J}{nn} &= - \NR I_n + \BI \diag{w} \trasp{\BO} \diag{\Phi'(x)},\\
\subscr{J}{ns} &= \BI \diag{\trasp{\BO} \Phi(x)},\\
\subscr{J}{sn} &= \diag{\hh}\bigl(\diag{ \trasp{\BO}\Phi(x)}\trasp{\BI} + \diag{\trasp{\BI}\Phi(x)}\trasp{\BO}\bigr) \diag{\Phi'(x)} - 2\OR\diag{w}\diag{\BIH}\trasp{\BI} \diag{\Phi'(x)},\\
\subscr{J}{ss} &= - \SR I_m - \OR \diag{\BIH}\diag{\BIH}.
\end{align*}

\subsubsection{Firing-rate-Oja Model}\label{appx:jacobian_fro}
It is $\subscr{f}{n} := - \NR \fr + \Phi\bigl(\BI \diag{w} \trasp{\BO} \fr + u\bigr)$ and $\subscr{f}{s} := \hh \circ \BOH\circ \BIH - \bigl(\SR I_m + \OR \diag{\BIH\circ\BIH} \bigr)w + \bar u$. We have:
\begin{align*}
\subscr{J}{nn} &= - \NR I_n+ \BI \diag{w} \trasp{\BO} \left[\Phi'\left(\BI \diag{w} \trasp{\BO} \fr + u\right)\right],\\
\subscr{J}{ns} &= \BI \diag{\trasp{\BO}\fr} \left[\Phi'\bigl(\BI \diag{w} \trasp{\BO} \fr + u\bigr)\right],\\
\subscr{J}{sn} &= \diag{\hh}\bigl(\diag{ \trasp{\BO}\Phi}\trasp{\BI} + \diag{\trasp{\BI}\Phi}\trasp{\BO}\bigr) \diag{\Phi'}\-2\OR\diag{w}\diag{\BIH}\trasp{\BI} \diag{\Phi'},\\
\subscr{J}{ss} &= - \SR I_m - \OR \diag{\BIH}\diag{\BIH}.
\end{align*}

\subsection{Computation of the Spectral Abscissa}
We present here the detailed calculations for the computation of the spectral abscissa for the different models we have analyzed.
\subsubsection{Hopfield-Hebbian Model}\label{apx:spectral_abscissa_hopfield_hebbian}
Consider the matrix 
\[
\JacMTildeHH =
\begin{bmatrix}
- \NR +\domax (\hmax \phimax^2 + \ubarmax)/\SR & \domax \phimax \\
2\hmax\phimax & -\SR
\end{bmatrix}.
\]
We recall that the spectral abscissa of a matrix is the maximum among the real part of the elements in its spectrum.
Being $\JacMTildeHH$ a $2 \times 2$ matrix, its eigenvalues are the zero of the characteristic polynomial
\begin{align*}
p(&\lambda) = \lambda^2 - \tr\bigl(\JacMTildeHH \bigr) \lambda + \det\bigl(\JacMTildeHH\bigr),
\end{align*}
where $\tr(\JacMTildeHH ) = -\bigl(\SR + \NR - (\bmax+ \ubarmax\domax)/\SR\bigr)$ and $\det\bigl(\JacMTildeHH\bigr) = \NR\SR -3 \bmax -\ubarmax\domax$.
Under assumption~\eqref{eq:conditon_cn_cs} it is $\gh>0$.
With this notation we can write
\begin{align*}
p(\lambda) &= \lambda^2 + \Bigl(\frac{\constHH + \gh}{\SR}\Bigl)\lambda + \gh.
\end{align*}
We have $p(\lambda) = 0$ if and only if
\begin{align*}
\lambda_{1} &= -\frac{\constHH + \gh-\sqrt{\left(\constHH + \gh\right)^2 - 4\gh\SR^2}}{2\SR},\\
\lambda_{2} &=-\frac{\constHH + \gh+ \sqrt{\left(\constHH + \gh\right)^2 - 4\gh\SR^2}}{2\SR}.
\end{align*}
We observe that if $\Delta \leq 0$ then $\operatorname{Re}\bigl(\lambda_1\bigr) = \operatorname{Re}\bigl(\lambda_2\bigr) = \alpha\bigl(\JacMTildeHH\bigr)$, 
while if $\Delta > 0$, then $\alpha\bigl(\JacMTildeHH\bigr) = \lambda_1 > \lambda_2$.
Hence we now analyze the sign of $\Delta$. First we observe that
\begin{align*}
\Delta &= \frac{(\SR^2 + \gh +2\bmax)^2 - 4\gh\SR^2}{\SR^2}=\frac{(\gh - \SR^2)^2 +4\bmax(\bmax +\SR^2 + \gh)}{\SR^2}.
\end{align*}
Assuming condition~\eqref{eq:conditon_cn_cs} and being $\bmax$ and $\SR$ non negative, it always results $\Delta > 0$, so that $\alpha(\JacMTildeHH) := \contrateHH = \lambda_{1}.$
\subsubsection{Firing-rate-Hebbian Model}\label{apx:spectral_abscissa_fr}
Given the matrix 
\[
\JacMTildeFH=
\begin{bmatrix}
\domax (\hmax \psimax^2 + \ubarmax)/\SR - \NR & \domax \psimax/\NR \\
2\hmax\psimax & -\SR
\end{bmatrix},
\]
we have
\begin{align*}
p(\lambda) = \lambda^2 + \frac{1}{\SR}\left(\constFH + \gfr\right)\lambda + \gfr.
\end{align*}
We observe that under assumption~\eqref{eq:conditon_cn_cs_fr} it is $\gfr>0$.
We have $p(\lambda) = 0$ if and only if
\begin{align*}
\lambda_{1} &= -\frac{\constFH + \gfr - \sqrt{(\constFH + \gfr)^2 - 4\gfr\SR^2}}{2\SR},\\
\lambda_{2} &= -\frac{\constFH + \gfr + \sqrt{(\constFH + \gfr)^2 - 4\gfr\SR^2}}{2\SR}.
\end{align*}
We observe that
\begin{align*}
\Delta &= \frac{1}{\SR^2}\left(\left(\SR^2 + \gfr + \frac{2}{\NR}\amax\right)^2 - 4\gfr\SR^2\right)
=\frac{1}{\SR^2}\left(\left(\gfr - \SR^2\right)^2 +\frac{4}{\NR}\amax\left(\frac{\amax}{\NR} +\gfr + \SR^2\right)\right).
\end{align*}
Therefore, under condition~\eqref{eq:conditon_cn_cs_fr} and being $\amax, \NR, \SR$ non negative, it again always results $\Delta > 0$, so that $\alpha(\JacM) := \contrateFH = \lambda_{1}$. 

\subsubsection{Hopfield-Oja Model}\label{apx:spectral_abscissa_hopfield_oja}
Consider the matrix 
\[
\JacMTildeHO =
\begin{bmatrix}
\domax (\hmax \psimax^2 + \ubarmax)/\SR- \NR & \domax \phimax \\
2\phimax(\hmax+\OR(\hmax\phimax^2+\ubarmax)/\SR) & -\SR
\end{bmatrix}
\]
and its characteristic polynomial
\begin{align*}
p(\lambda)&= \lambda^2 + \left(\frac{\constHO + \go}{\SR}\right)\lambda + \go.
\end{align*}
We have $p(\lambda) = 0$ if and only if
\begin{align*}
\lambda_{1}&= -\frac{\constHO + \go-\sqrt{\left(\constHO + \go\right)^2 - 4\go\SR^2}}{2\SR},\\
\lambda_{2} &=-\frac{\constHO + \go + \sqrt{\left(\constHO + \go \right)^2 - 4\go\SR^2}}{2\SR}.
\end{align*}
It is
\begin{align*}
\Delta &= \frac{\left(\SR^2 + \go +2\bmax +2\phi^2\OR/\SR(\bmax+\domax\ubarmax)\right)^2}{\SR^2}- 4\go\\
&=\frac{(\go - \SR^2)^2 +4\bmax(\bmax +\SR^2 + \go)}{\SR^2}+\frac{4\phi^4\OR^2/\SR^2(\bmax+\domax\ubarmax)^2}{\SR^2}\\
&\quad +\frac{4\phi^2\OR/\SR(\bmax+\domax\ubarmax)\left(\SR^2 +\go + 2\bmax\right)}{\SR^2}.
\end{align*}
Therefore, under condition~\eqref{eq:conditon_cn_cs_oja} and being $\bmax$, $\OR$, and $\SR$ non negative, it always results $\Delta > 0$, so that $\alpha(\JacMTildeHO) := \contrateHO = \lambda_{1}$.
\subsubsection{Firing-rate-Oja Model}\label{apx:spectral_abscissa_firingrate_oja}
We consider the matrix
\[
\JacMTildeFO =
\begin{bmatrix}
\displaystyle\frac{\domax (\hmax \psimax^2 + \ubarmax)}{\SR} - \NR & \displaystyle \frac{\domax \psimax}{\NR}\\
\displaystyle 2\phimax\left(\hmax+\frac{\OR}{\SR}(\hmax\phimax^2+\ubarmax)\right) & -\SR
\end{bmatrix}
\]
and its characteristic polynomial
\begin{align*}
p(\lambda)&= \lambda^2 + \left(\frac{\constFO + \gof}{\SR}\right)\lambda + \gof,
\end{align*}
where for simplicity of notation we have defined $\amax := \domax \hmax \psimax^2$, $\constFO := \SR^2 + 2\amax/\NR +2\phimax^2\frac{\OR}{\SR\NR}(\amax+\domax\ubarmax)$ and $\gof = \NR\SR -\amax\left(1 + \frac{2}{\NR}\right) -\domax\ubarmax -2\frac{\OR}{\SR\NR}\phimax^2\left(\amax + \domax\ubarmax\right)$. We observe that under assumption~\eqref{eq:conditon_cn_cs_oja} it is $\gh>0$.

We have $p(\lambda) = 0$ if and only if
\begin{align*}
\lambda_{1}&= -\frac{\constFO + \gof-\sqrt{\left(\constFO + \gof\right)^2 - 4\gof\SR^2}}{2\SR},\\
\lambda_{2} &=-\frac{\constFO + \gof + \sqrt{\left(\constFO + \gof \right)^2 - 4\gof\SR^2}}{2\SR}.
\end{align*}
The discriminant of $p(\lambda)$ is
\begin{align*}
\Delta &= \displaystyle \frac{\left(\SR^2 + \gof +2\frac{\amax}{\NR} +2\phimax^2\frac{\OR}{\SR\NR}(\amax+\domax\ubarmax)\right)^2}{\SR^2} - 4\gof\\
&=\displaystyle \frac{(\gof - \SR^2)^2 +4\frac{\amax}{\NR}(\frac{\amax}{\NR} +\SR^2 + \gof)}{\SR^2} \displaystyle +\frac{4\phimax^4\left(\frac{\OR}{\SR\NR}\right)^2(\amax+\domax\ubarmax)^2}{\SR^2}\\
&\displaystyle \quad +\frac{4\phimax^2\frac{\OR}{\SR\NR}(\amax+\domax\ubarmax)\left(\SR^2 +\gof + 2\frac{\amax}{\NR}\right)}{\SR^2}.
\end{align*}
Therefore, under condition~\eqref{eq:conditon_cn_cs_oja} and being $\bmax$, $\OR$, and $\SR$ non negative, it always results $\Delta > 0$, so that $\alpha(\JacMTildeFO) := \contrateFO = \lambda_{1}$.
\end{extendedArxiv}

\bibliographystyle{plainurl+isbn}
\bibliography{main}
\end{document}